\bmdefine{\bmid}{\mid}
\newcommand{\intt}[1]{ [\![  #1 ]\!]}
\providecommand{\keywords}[1]{\textbf{\textit{Keywords: }} #1}
\newtheorem{theorem}{Theorem}[section]
\newtheorem{lemma}[theorem]{Lemma}
\theoremstyle{definition}
\newtheorem{definition}[theorem]{Definition}
\newtheorem{proposition}[theorem]{Proposition}
\newtheorem{example}[theorem]{Example}
\theoremstyle{remark}
\newtheorem{remark}[theorem]{Remark}
\newcommand{\ci}{{\bf{i}}}
\newcommand{\ck}{{\bf{k}}}
\newcommand{\cs}{{\bf{s}}}
\newcommand{\CS}{{\sf{S}}}
\newcommand{\CK}{{\sf{K}}}
\newcommand{\CI}{{\sf{I}}}
\newcommand{\LCL}{$LCL$}
\newcommand{\TCL}{\text{\scriptsize CL}}
\begin{document}
\title{Logic of Combinatory Logic}

\author[1]{{Simona Ka\v sterovi\' c}\thanks{Corresponding author.}}
\author[1,2]{Silvia Ghilezan}
\affil[1]{University of Novi Sad, Faculty of Technical Sciences }
   
\affil[2]{Mathematical Institute SASA, Belgrade, Serbia}
  
\date{{\{simona.k, gsilvia\}@uns.ac.rs}
   }

\maketitle
\begin{abstract}
We develop a classical propositional logic for reasoning about combinatory logic. We define its syntax, axiomatic system and semantics. The syntax and axiomatic system are presented based on classical propositional logic, with typed combinatory terms as basic propositions,  along with the semantics based on applicative structures extended with special elements corresponding to primitive combinators. Both the equational theory of untyped combinatory logic and the proposed axiomatic system are proved to be sound and complete w.r.t. the given semantics. In addition, we prove that combinatory logic is sound and complete w.r.t. the given semantics. 
\end{abstract}
\keywords{combinatory logic, classical propositional logic, equational theory, simple types, axiomatization, semantics, soundness, completeness}

 \section{Introduction}\label{sec: Introduction}
\vspace{0.5em}

Combinatory logic was invented a century ago, in the 1920s, when its basic idea was introduced by Moses Sch\" onfinkel \cite{S1924} (a recent comprehensive overview can be found in \cite{wolfram2021}). The foundations of combinatory logic were established by Haskell Curry \cite{Curry1930} in the 1930s and it has been developing ever since. 
It is of the same expressive power as the $\lambda$-calculus, invented by Alonso Church in the 1930s \cite{C40}, capturing all computable functions nonetheless  without using bound variables. With respect to this, the syntax of combinatory logic is simpler and the obstacles that emerge from bound variables in $\lambda$-calculus are completely avoided in combinatory logic.

Typed combinatory logic 
is a system in which application is controlled by types assigned to terms  \cite{C34}.  Types in logic were firstly introduced informally  by Russell and Whitehead in \cite{russell25}. This type theory was simplified into the simple theory of types in \cite{Ramsey26}. This simple theory of types  was the basis for many modern type systems and for the simply typed lambda calculus \cite{C40}. In \cite{KamareddineLN02}, the authors present the prehistory of type theory, the development of types between Principia Mathematica (\cite{russell25}) and simply typed lambda calculus (\cite{C40}), and the comparison between these type systems.
After a first type system was introduced, now called {\em simple types}, various type systems were proposed, such as intersection types, dependent types, polymorphic types, among others, and found their applications in programming languages, automated theorem provers and proof assistants \cite{BDS13}. Due to a wide range of its applications, combinatory logic, both untyped and typed, has been a topic of interest and the object of many studies, e.g. \cite{Bendowski15,BessaiDDMR14,Bimbo04,Bunder02,
DudderMRU12,DunnM97,Smullyan1985}. 
Developments, some of them very recent,  in a wide range of different scientific fields of computer science, e.g. program synthesis \cite{DudderMRU12}, machine learning, e.g. \cite{LiangJK10}  and artificial intelligence, e.g. \cite{GarretteDBS15}, cognitive representation, e.g. \cite{D04}, natural language, e.g. \cite{SB11}, physics, e.g. \cite{ToffanoD20}, just to mention a few, urge for further research and development  of the theory and reasoning about combinatory logic both typed and untyped.

Various extensions of combinatory logic, both untyped and typed, have been considered in order to obtain formalisms capable to express new features and paradigms, e.g. extending syntax with new constructors such as pairs, records, variants, among others, to enable building compound data structures, better organization of data and  dealing with heterogeneous collections of values \cite{Pierce2002} or extending the syntax with new operators, for example, adding probabilistic operator shifts to a new paradigm, called probabilistic computation \cite{LagoSA14, KasterovicP19}. Herein, the approach we are  interested in combines typed combinatory logic with classical propositional logic in order to obtain the formal system {\em logic of combinatory logic} to reason about typed combinatory terms. 

There are several reasons to be interested in extending combinatory logic in this way. From the perspective of reasoning, the proposed system is a standardized way to syntactically represent and reason about typed combinatory terms.  This is done by considering a reformulation, and slight extension, of the language of classical propositional logic. In turn, well-established methods such as DPLL, resolution method, SAT solvers, SMT solvers can be employed to reason about combinatory logic, which can encompass the development of tool to automatically reason about combinatory terms and programs, due to the Curry-Howard correspondence.  
From the perspective of programming language theory, this is a convenient framework for abstract syntax: the structure of programming languages, disregarding the superficial details of concrete syntax.
From a proof-theoretic perspective, logic of combinatory logic provides a precise correspondence between syntactic and semantic structure: the rules of classical propositional logic give 
an internal language for reasoning about combinatory logic. %
Another motivation for this work comes from logic-based knowledge representation formalisms, particularly description logic, which is used to represent the conceptual knowledge of an application domain in a structured and formal way, \cite{Baader09}.  Application domains can be very different and so can formal logics for reasoning about them. The Web Ontology Language OWL is a  description logic based language which is nowadays the standard ontology language for the semantic web. In this context, our approach can be seen as a kind of description logic of combinatory logic and the semantics and methodology we develop can be applied to other description logic based languages.

Systems constructed in a similar manner comprise applicative theories by Feferman~\cite{F79}, lambda logic by Beeson~\cite{Beeson04}, logic for reasoning about reversible logic circuits by Axelsen et al.~\cite{AxelsenGK16}. 
Though our approach is similarly motivated, to the best of our knowledge, the system defined in this paper has not been studied before: we define classical propositional logic over simply typed combinatory logic in order to capture inference of type assignment statements. 

\vspace{0.7em}
\subsection*{Contributions}
\vspace{0.5em}

We introduce in this paper a classical propositional logic for reasoning about simply typed combinatory logic, called {\em logic of combinatory logic} and denoted by \LCL. The logic \LCL \, can be explained in two ways:

\begin{itemize}
\item it is a logic obtained by extending the simply typed combinatory logic with classical propositional connectives, and corresponding axioms and rules; 
\vspace{0.5em}
\item it is a logic obtained from classical propositional logic by replacing propositional letters with  type assignment statements $M : \sigma$, i.e. typed combinatory terms, where $M$ is a term of the combinatory logic, and $\sigma$ is a simple type.
\end{itemize}

 We define \LCL\ syntax, axiomatic system and semantics. The axiomatic system (\ref{sec:axiomatization}) has two kinds of axioms: non-logical ones concerned with features of combinatory logic and logical ones concerned with logical connectives. 
  The semantics (\ref{sec:semantics}) is based on the notion of an applicative structure extended with special elements corresponding to the primitive combinators,  inspired by the applicative structures introduced in \cite{MitchellM87,MitchellM91} and \cite{KasterovicG20}.

   The first main result is the {\bf soundness and completeness of the equational theory of untyped combinatory logic} with respect to the proposed semantics, proved in Theorem~\ref{thm:soundness of eq theory} and Theorem~\ref{thm:completeness of eq theory}, respectively. 
   
  The second main result of the paper is the {\bf soundness and completeness of the \LCL\ axiomatization} with respect to the proposed semantics, proved in Theorem~\ref{thm:soundness} and Theorem~\ref{thm:completeness2}, respectively. The completeness theorem is proved by an adaptation of Henkin-style completeness method. We first prove that every consistent set can be extended to a maximal consistent set, which is further used  to introduce the notion of a canonical model. Finally, we prove that every consistent set is satisfiable, and as a consequence we have the completeness theorem: whenever a formula is a semantical consequence of a set of formulas, it is also a deductive consequence of that set. 
  
  In addition, we prove soundness and completeness of combinatory logic in Theorem~\ref{thm:soundness of CL} and Theorem~\ref{thm:completeness of CL}, respectively, which yields the third main result, a {\bf new semantics of combinatory logic}.
  
  Finally, we prove in Theorem~\ref{thm:conservative extension} that \LCL\ is a conservative extension of the simply typed combinatory logic.
  
To the best of our knowledge, the system defined in this paper has not been studied before.

\subsection*{Related work}

The idea to combine different logical systems with the aim to capture reasoning about certain logical structures has been present since Dana Scott's deductive system for computable functions proposed in 1969 and unpublished until 1993 \cite{Scott93}, the unpublished work of John McCarthy, as remarked in \cite{Beeson04}, and Feferman~\cite{F79}.

  An approach similar to ours is presented in \cite{Beeson04}, where  the lambda logic is introduced as the union of first-order logic and $\lambda$-calculus, in order to obtain a more powerful tool for representing the notion of a function.

  Systems of illative combinatory logic, studied in \cite{BarendregtBD93,Czajka11,Czajka13,DekkersBB98},  consist in extending the theory of combinators 
  with additional constants, and corresponding axioms and rules, in order to capture inference. The "Fool's model" in \cite{Meyer91} is a model of combinatory logic in naive set theory which relates untyped combinatory logic and implicational logic, where the models are constructed out of the algebra of implicational formulas.
  
  Despite the similarities in the concept, all the mentioned  approaches consist in extending untyped calculi, whereas we consider the extension of typed combinatory logic with propositional connectives in order to capture the inference of type assignment  statements. On the other hand, in \cite{Scott93} the typed system of combinators, including fixed-point combinator, is extended with logical constants and connectives and the system called Logic of Computable Functions (LCF) intended for reasoning about recursively defined functions is obtained.
  
The approach of \cite{AxelsenGK16} is in  a different sense related to ours. 
A classical propositional logic was introduced  there for reasoning about reversible logic circuits, where reversible logic circuits are represented as propositions in a variant of classical propositional logic extended with ordered multiplicative conjunction. In a broader context, this work is related to ours since they share the same philosophy of extending a basic logical system, namely, reversible logic circuits and combinatory logic, herein, with classical propositional logic in order to formally reason about the basic logical system.

Different models for combinatory logic have been proposed, and soundness and completeness of both equational calculi and type assignment systems with respect to those models  have been  studied \cite{Bimbo04, Bimbo12, Barendregt71, Barendregt20, AF20}. However, as far as we know, the novel techniques presented in this paper were not considered in the previous approaches.

  \subsection*{Organisation of the paper}

The rest of the paper is organised as follows. Section~\ref{sec:preliminaries} reviews some basic notions of simply typed combinatory logic, its syntax, reductions and equational theory. Section~\ref{sec:LCL}  introduces the logic of combinatory logic: its syntax, axiomatic system and semantics.  Section~\ref{sec:equational soundness and completeness}  proves the soundness and completeness  of the equational theory of untyped combinatory logic w.r.t. the proposed semantics.  Section~\ref{sec:semantical soundness and completeness} proves the soundness and completeness  of the given axiomatic system w.r.t. the given semantics 
and proves that combinatory logic is 
sound and complete with respect to the proposed semantics. Section~\ref{sec:conclusion}  concludes and contains discussion of further work.

\section{Simply Typed Combinatory Logic}\label{sec:preliminaries}
\vspace{0.5em}

In this section we revisit some basic notions of simply typed combinatory logic $CL_\rightarrow$~\cite{B85,Bimbo12, hindley_seldin_2008}. 
\vspace{0.5em}
\subsection{Syntax}
\vspace{0.5em}
The alphabet of untyped combinatory logic, $CL$, consists of a countable set of term variables $V$, term constants (or constants for short) and a binary operation called {\em application}, denoted by juxtaposition.   $CL$-terms are generated by the following syntax:
\begin{align*} 
\displaystyle M, N  ::= & \; x \;  {\mid} \; \CS \;  {\mid} \; \CK \; {\mid } \; \CI \;  {\mid } \; M N
\end{align*}
\noindent
where $x$ is a term variable. The set of all terms is denoted by $CL$. We include in the language the constants $\CS, \CK, \CI$; however there are other constants that can be included such as $\sf{B}, \sf{W}, \sf{C}, \ldots $, see~\cite{Bimbo12}.   The constants $\CS, \CK, \CI$ are called {\em primitive combinators}. The set of all  $CL$-terms is ranged over by capital letters $M, N, \ldots, M_1, \ldots$. By $FV(M)$ we will denote the set of variables that occur in $M$. By $M\{N / x \}$ we will denote the result of substitution of the term $N$ for variable $x$ in the term $M$.

Combinators are characterized by the number of terms they are applied to (number of arguments they can have) and the effect they have when they are applied to other terms. The following axioms characterize the behavior of the primitive combinators $\CS, \CK, \CI$, they show the effect that  combinators  $\CS, \CK, \CI$ have  when applied to other terms:

\begin{align*}
\CS M N L & \rightarrow (ML)(NL) \\
 \CK M N & \rightarrow M  \\ 
  \CI M & \rightarrow M 
\end{align*}

Terms $\CS M N L, \CK M N, \CI M $ are called {\em redexes}. A redex can be a subterm of some term. In order to reduce  a term which contains a redex, a one-step reduction $\triangleright_1$ is introduced.
\begin{definition}
 If $N$ is a redex such that $N \rightarrow N'$ and $N$ is a subterm of a term $M$, then $M \triangleright_1 M'$, where $M'$ is obtained from $M$ by replacing the subterm $N$ with $N'$.
\end{definition}
The reflexive and transitive closure of one-step reduction $\triangleright_1$ is called {\em weak reduction} and it  is denoted by $\triangleright_w$. The reflexive, transitive and symmetric closure of the reduction $\triangleright_1$ is called {\em weak equality} and  denoted by $=_w$. Weak equality can be inductively characterized as follows: \begin{enumerate}
\item If $M \triangleright_1 N$, then $M =_w N$.
\item If $M$ is a $CL$-term, then $M =_w M$.
\item If $M =_w N$, then $N =_w M$.
\item If $M =_w N$ and $N =_w L$, then $M =_w L$.
\end{enumerate}

{\em Extensional weak equality}, denoted by $=_{w, \eta}$, is obtained from  $\triangleright_w$ as follows.

\begin{definition}
$CL$-terms $Q$ and $Q\{M_1 / N_1, \ldots, M_m / N_m \}$ are extensionally weakly equal, denoted by $Q =_{w, \eta} Q\{M_1 / N_1, \ldots, M_m / N_m \}$ if and only if  for every $i \in \{1, \ldots, m \}$, there exists $n_i$ such that for a series of distinct variables $x_{i, 1}, x_{i,2}, \ldots, x_{i, n_i}$, that do not occur neither in  $M_i$ nor  $N_i$, there is  $P_i$ with $M_i x_{i, 1}, x_{i, 2}, \ldots x_{i, n_i} \triangleright_w P_i$  and  $N_i x_{i, 1} x_{i, 2} \ldots x_{i, n_i} \triangleright_w P_i$.
\end{definition}

 \vspace{0.5em}
 \subsection{Equational theory}\label{sec:eqtheory}
\vspace{0.5em}

The axioms and rules in Fig.~\ref{fig:eq-calc} define an equational theory $\mathcal{EQ}$. We say that $CL$-terms $M$ and $N$ are equal, $M = N$, if  $M = N$ can be derived from the set of axioms and rules presented in Fig.~\ref{fig:eq-calc}. The equational theory $\mathcal{EQ}$ determines a new equivalence relation on the set of all $CL$-terms. The introduced equivalence relations, namely, the equivalence relation generated by the equational theory $\mathcal{EQ}$ and the weak equality $=_w$ coincide on the set of all $CL$-terms.

\begin{figure}[!ht]\centering
\begin{tabular}{cccc} 
\bigskip
\AxiomC{$ M = M$ \quad $(id)$ }
\DisplayProof 
 & 
 \AxiomC{$\CS M N L = (ML)(NL)$ \quad $(S)$}
\DisplayProof 
 &
 \AxiomC{$ \CK M N  = M$ \quad $(K)$}
\DisplayProof  \bigskip \\ \bigskip

 \AxiomC{$\CI M = M$ \quad $(I)$}
 \DisplayProof  
&
\AxiomC{$M = N$}
\RightLabel{(sym)}
\UnaryInfC{$N = M$}
\DisplayProof &
\AxiomC{$M = N$}
\AxiomC{$N = L$}
\RightLabel{(trans)}
\BinaryInfC{$M = L$}
\DisplayProof \bigskip \\ \bigskip
 \AxiomC{$M = N$}
 \RightLabel{(app-l)}
 \UnaryInfC{$MP = NP$}
 \DisplayProof
 &
 \AxiomC{$M = N$}
 \RightLabel{(app-r)}
 \UnaryInfC{$PM = PN$}
 \DisplayProof \bigskip

\end{tabular}\caption{Equational theory $\mathcal{EQ}$}\label{fig:eq-calc}
\end{figure}

\begin{proposition} An equation $M = N$ is provable in $\mathcal{EQ}$ if and only if $M =_w N$.
\end{proposition}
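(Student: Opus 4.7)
The plan is to prove both directions by induction, exploiting the inductive characterization of $=_w$ given just above and induction on the $\mathcal{EQ}$-derivation respectively.

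For the ($\Rightarrow$) direction, I would argue by induction on the derivation of $M = N$ in $\mathcal{EQ}$. The axiom $(id)$ gives $M =_w M$ by clause (2) of the inductive characterization. The axioms $(S)$, $(K)$, $(I)$ state exactly that certain redexes reduce in one step, so $M \triangleright_1 N$, and then clause (1) yields $M =_w N$. The rules $(sym)$ and $(trans)$ are absorbed directly by clauses (3) and (4). The only nontrivial point is $(app\text{-}l)$ and $(app\text{-}r)$: for these I would first establish the auxiliary closure lemma that $M =_w N$ implies $MP =_w NP$ and $PM =_w PN$. This lemma is itself proved by induction on the derivation of $M =_w N$; the key base case is $M \triangleright_1 N$, which holds because contracting a redex that sits inside $M$ still contracts a redex inside $MP$ (resp.\ $PM$), so one-step reduction is preserved under application on either side.

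For the ($\Leftarrow$) direction, I would do induction on the inductive characterization of $=_w$. Clauses (2), (3), (4) are immediate from $(id)$, $(sym)$, $(trans)$. The crux is clause (1): if $M \triangleright_1 M'$, derive $M = M'$ in $\mathcal{EQ}$. By the definition of $\triangleright_1$, there is a redex $R$ occurring as a subterm of $M$ with $R \rightarrow R'$, and $M'$ is the result of replacing that occurrence of $R$ by $R'$. I would prove by structural induction on $M$ that such a replacement yields $M = M'$ in $\mathcal{EQ}$. In the base case $M \equiv R$, one of the axioms $(S)$, $(K)$, $(I)$ gives $R = R'$ directly. In the inductive case $M \equiv M_1 M_2$, the indicated redex lies either inside $M_1$ or inside $M_2$; the induction hypothesis gives the corresponding equation on the subterm, and one application of $(app\text{-}l)$ or $(app\text{-}r)$ lifts it to $M = M'$.

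The main obstacle will be the bookkeeping around positions: making the notion of \emph{``the redex $R$ is a subterm of $M$ and $M'$ replaces that occurrence by $R'$''} precise enough to run the inner structural induction cleanly. A tidy way to do this is to introduce one-hole contexts $C[\cdot]$, rewrite clause (1) as $M \equiv C[R]$ with $M' \equiv C[R']$, and then observe that induction on $C$ is exactly an iterated use of $(app\text{-}l)$ and $(app\text{-}r)$ starting from the appropriate combinator axiom. Once the context-based formulation is in place, everything else is routine induction mirroring the definitions, and the two directions combine to give the claimed equivalence.
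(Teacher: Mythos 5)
Your proof is correct. Note, however, that the paper does not actually prove this proposition itself: its entire ``proof'' is a citation to Bimb\'o's monograph, so there is no in-paper argument to compare yours against. What you have written is the standard self-contained argument that such a reference points to: a double induction, with the forward direction proceeding on the $\mathcal{EQ}$-derivation (requiring the auxiliary closure lemma that $=_w$ is compatible with application on both sides, since the paper's inductive characterization of $=_w$ has no explicit congruence clauses --- congruence is hidden inside the ``redex occurs as a subterm'' condition of $\triangleright_1$), and the backward direction reducing to the single nontrivial case $M \triangleright_1 M'$, handled by structural induction on the term (equivalently, on the one-hole context isolating the contracted redex occurrence). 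You correctly identify both of the points where care is needed --- the closure lemma and the occurrence bookkeeping --- and your context-based reformulation resolves the latter cleanly. The trade-off is simply that the paper outsources roughly a page of routine induction to the literature, whereas your version makes the proposition self-contained at the cost of introducing one-hole contexts that the paper otherwise never uses.
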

\begin{proof}
See~\cite{Bimbo12}.
\end{proof}

\vspace{0.5em}

In order to obtain an extensional version of the equational theory $\mathcal{EQ}$, we  extended equational theory $\mathcal{EQ}$ with the following rule:
\[\AxiomC{$Mx = Nx$}
\AxiomC{$x \not\in FV(M) \cup FV(N)$}
\RightLabel{(ext)}
\BinaryInfC{$M = N$}
\DisplayProof \]

The resulting theory is  denoted by $\mathcal{EQ}^\eta$. The equational theory $\mathcal{EQ}^\eta$ generates an equivalence relation which coincides with extensional weak equality.

\begin{proposition} An equation $M = N$ is provable in $\mathcal{EQ}^\eta$ if and only if $M =_{w, \eta} N$.
\end{proposition}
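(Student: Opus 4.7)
The plan is to prove both directions by leveraging the preceding proposition (for $\mathcal{EQ}$ and $=_w$) and treating the extensionality rule and the extensional clause in the definition of $=_{w,\eta}$ as the only new ingredients, since $\mathcal{EQ}^\eta$ is obtained from $\mathcal{EQ}$ by adding exactly the rule (ext) and $=_{w,\eta}$ is the extensional refinement of $=_w$.

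For soundness ($\Rightarrow$), I would induct on the derivation of $M = N$ in $\mathcal{EQ}^\eta$. All rules inherited from $\mathcal{EQ}$ are handled by the previous proposition together with the inclusion $=_w \,\subseteq\, =_{w,\eta}$ (which holds by taking the empty sequence of applied variables, so the shared reduct is provided directly by $\triangleright_w$). The novel case is (ext): assume $Mx =_{w,\eta} Nx$ with $x \notin FV(M) \cup FV(N)$. Unpacking the definition of $=_{w,\eta}$ yields distinct fresh variables $y_1, \ldots, y_k$ (not in $M$ or $N$) and a term $P$ with $(Mx)y_1\ldots y_k \triangleright_w P$ and $(Nx)y_1\ldots y_k \triangleright_w P$. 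The longer sequence $x, y_1, \ldots, y_k$, all fresh for $M$ and $N$, then witnesses $M =_{w,\eta} N$ directly. Congruence, reflexivity, symmetry, and transitivity of $=_{w,\eta}$ also have to be verified along the way, but they follow routinely from the analogous closure properties of $\triangleright_w$, with Church--Rosser supplying common reducts where transitivity is needed.

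For completeness ($\Leftarrow$), suppose $M =_{w,\eta} N$. By definition there exist distinct variables $x_1, \ldots, x_n$, none occurring in $M$ or $N$, and a term $P$ with $Mx_1\ldots x_n \triangleright_w P$ and $Nx_1\ldots x_n \triangleright_w P$. Hence $Mx_1\ldots x_n =_w Nx_1\ldots x_n$, so by the preceding proposition $Mx_1\ldots x_n = Nx_1\ldots x_n$ is derivable in $\mathcal{EQ}$, and therefore in $\mathcal{EQ}^\eta$. I then apply (ext) successively $n$ times, stripping $x_n, x_{n-1}, \ldots, x_1$ in turn; the side condition $x_i \notin FV(\cdot) \cup FV(\cdot)$ is met at each stage since each $x_i$ was originally chosen not to occur in $M$ or $N$. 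This concludes with $M = N$ in $\mathcal{EQ}^\eta$.

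The main obstacle is that the definition of $=_{w,\eta}$ given in the excerpt is stated in the somewhat compound form "$Q$ versus $Q\{M_1/N_1,\ldots,M_m/N_m\}$", implicitly folding congruence into extensionality. To make the soundness induction go through, I must first establish that this relation is genuinely an equivalence and a congruence on $CL$-terms; and in the completeness argument I must unpack the compound definition to isolate the pointwise condition $Mx_1\ldots x_n \triangleright_w P \triangleleft_w Nx_1\ldots x_n$ in exactly the shape needed to iterate (ext). Once that bookkeeping is in place, both directions reduce cleanly to the already-proved correspondence between $\mathcal{EQ}$ and $=_w$.
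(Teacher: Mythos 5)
First, note that the paper does not actually prove this proposition: its ``proof'' is a pointer to the literature (Bimb\'o's monograph), so your attempt cannot be matched against an argument in the text and has to stand on its own. Your overall strategy --- reduce everything to the already-established correspondence between $\mathcal{EQ}$ and $=_w$, and treat the rule (ext) and the extensional clause of $=_{w,\eta}$ as the only new ingredients --- is the natural one. But as written the argument has a genuine gap, and it occurs at exactly the step you flag as ``bookkeeping.'' In both directions you unpack an instance of $=_{w,\eta}$ into a single top-level witness: fresh variables $y_1,\dots,y_k$ and a term $P$ with $Qy_1\cdots y_k \triangleright_w P$ and $Q'y_1\cdots y_k \triangleright_w P$. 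The definition does not deliver this. The relation $Q =_{w,\eta} Q\{M_1/N_1,\dots,M_m/N_m\}$ can hold via replacements of proper subterms buried under a head variable, in which case no sequence of applied variables ever exposes a common reduct of the whole terms. Concretely, $y(\CS\CK\CK) =_{w,\eta} y\,\CI$ holds by replacing the subterm $\CS\CK\CK$ by $\CI$ (since $\CS\CK\CK z \triangleright_w z \triangleleft_w \CI z$), yet $y(\CS\CK\CK)z_1\cdots z_k$ and $y\,\CI\,z_1\cdots z_k$ are distinct weak normal forms for every $k$. This breaks your (ext) case in the soundness direction (the induction hypothesis $Mx =_{w,\eta} Nx$ need not give the common reduct of $(Mx)\vec{y}$ and $(Nx)\vec{y}$ that you prepend $x$ to), and it breaks the completeness direction as stated (when $m>1$ or the replaced occurrences are proper subterms, you must first derive each $M_i = N_i$ in $\mathcal{EQ}^\eta$ and then lift these by an inner induction on the structure of $Q$ using (app-l) and (app-r); your text only treats the case where the whole term is replaced).

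The second, related gap is your remark that reflexivity, symmetry, transitivity and congruence of $=_{w,\eta}$ ``follow routinely'' from Church--Rosser. Congruence and symmetry are indeed built into the replacement format, but transitivity is not routine: composing two replacement steps does not obviously yield a single replacement step of the required shape (the same example, composed with $y\,\CI =_{w,\eta} \CK(y\,\CI)w$, shows that a naive whole-term composition fails), and under the most literal reading of the definition in the paper one must either read $=_{w,\eta}$ as the equivalence relation generated by such replacement steps or prove a nontrivial commutation/Church--Rosser-style lemma. Closure of $=_{w,\eta}$ under (trans) and (ext) is precisely the mathematical content of the soundness direction; deferring it as bookkeeping leaves the proof incomplete. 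The skeleton is right and the completeness direction is essentially repairable by the inner induction just described, but the soundness direction needs an explicit case analysis on where the replaced occurrences sit inside $Mx$ (inside $M$, the whole term $Mx$, or the occurrence of $x$ itself, the last being excluded by the side condition) together with an honest proof of transitivity.
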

\begin{proof}
See \cite{Bimbo12}.
\end{proof}

\vspace{0.5cm}

The equational theory $\mathcal{EQ}$ and its connection to weak equality $=_w$, as well as an inequational theory, which we do not consider in the paper, and its connection to weak reduction are elaborated in detail in \cite{Bimbo12}.

\subsection{Types}
We are interested only in typed $CL$-terms, more precisely, we are interested in simply typed $CL$-terms. Let $V_{\sf Type} = \{a, b, c, . . ., a_1, \ldots \}$ be a countable set of  type variables.  {\em Simple types} are generated by the following syntax:

\[ \sigma, \tau ::= a \mid \sigma \rightarrow \tau \]
\noindent
where $a \in V_{\sf Type}$. The set of all simple types is denoted by ${\sf Types}$ and is ranged over by $\sigma, \tau, \ldots, \sigma_1, \ldots$.

\begin{definition}\label{def:statement...} $ $

\begin{enumerate}
\item[(i)] {\em A (type assignment)  statement} is of the form $M : \sigma$, where $M\in CL$ and  $\sigma \in {\sf Types}$. The term $M$ is the {\em subject} and the type $\sigma$ is the {\em predicate} of the statement.
\item[(ii)] A  statement with a term variable as subject, $x : \sigma$, is called {\em a declaration}.
\item[(iii)]  {\em A basis} ({\em context}) is a set of declarations with distinct term variables as subjects.
\item[(iv)] Let $\Gamma = \{x_1 : \sigma_1, \ldots, x_n : \sigma_n \}$ be a basis. The set $dom(\Gamma) = \{ x_1, \ldots, x_n \}$ is the {\em domain} of the basis $\Gamma$.
\item[(v)] Let $\Gamma = \{x_1 : \sigma_1, \ldots, x_n : \sigma_n \}$ be a basis. We define the set $| \Gamma | = \{ \sigma_1, \ldots, \sigma_n \}$.
\end{enumerate}
\end{definition}

The type assignment system for {\em simply typed combinatory logic}, denoted by $CL_\rightarrow$,  is presented in Fig.~\ref{fig: type assignment}. 
 If the typing judgment $\Gamma \vdash_{\TCL} M : \sigma$ can be derived by the rules of Fig.~\ref{fig: type assignment}, then we say that a statement $M:\sigma$ is derivable from a basis $\Gamma$. By $CL_\rightarrow$ we will also denote the set  of all statements $M:\sigma$ which are typable from some basis $\Gamma$, i.e. \[CL_\rightarrow =  \{ M : \sigma \mid \exists \Gamma. \ \Gamma \vdash_{\TCL} M : \sigma \}. \]

\begin{figure}[!t]
\centering
\begin{tabular}{cc}

\multicolumn{2}{c}{   \AxiomC{$x : \sigma \in \Gamma$}
    \RightLabel{(axiom $\in$)}
    \UnaryInfC{$\Gamma \vdash_{\TCL} x :\sigma$} 
    \DisplayProof  \bigskip} \\ 
     \multicolumn{2}{c}{ \bigskip\AxiomC{}
    \RightLabel{(axiom $S$)}
    \UnaryInfC{$\Gamma \vdash_{\TCL} \CS: (\sigma \rightarrow (\rho \rightarrow \tau)) \rightarrow (\sigma \rightarrow \rho)  \rightarrow (\sigma \rightarrow \tau)$} 
    \DisplayProof} \bigskip  \\    
      \AxiomC{}
    \RightLabel{(axiom $K$)}
    \UnaryInfC{$\Gamma \vdash_{\TCL} \CK : \sigma \rightarrow ( \tau \rightarrow \sigma)$} 
    \DisplayProof 
    &
      \AxiomC{}
    \RightLabel{(axiom $I$)}
    \UnaryInfC{$\Gamma \vdash_{\TCL} \CI : \sigma \rightarrow \sigma$} 
    \DisplayProof \bigskip \\ 
   \multicolumn{2}{c}{ \bigskip 
     \AxiomC{$\Gamma \vdash_{\TCL} M: \sigma \rightarrow \tau$}
     \AxiomC{$\Gamma \vdash_{\TCL} N : \sigma$}
    \RightLabel{($\rightarrow$ elim)}
    \BinaryInfC{$\Gamma \vdash_{\TCL} MN : \tau$} 
    \DisplayProof\bigskip  } 
\end{tabular}
\caption{Type Assignment System $CL_\rightarrow$}\label{fig: type assignment}
\end{figure}

We end this section with some properties of the type assignment system $CL_{\rightarrow}$.

\begin{proposition}\label{lemma: weakening lemma} If  $ \Gamma \vdash_{\TCL} M : \sigma $ and $ \Gamma \subseteq \Gamma' $ then $ \Gamma' \vdash_{\TCL} M : \sigma$.
\end{proposition}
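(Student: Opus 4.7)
The plan is a routine structural induction on the derivation of $\Gamma \vdash_{\TCL} M : \sigma$, following the rules of Figure~\ref{fig: type assignment}. Fix $\Gamma' \supseteq \Gamma$ at the outset; the induction will be on the height (or structure) of the derivation tree producing $\Gamma \vdash_{\TCL} M : \sigma$, and in each case I will exhibit a derivation of $\Gamma' \vdash_{\TCL} M : \sigma$ using the same rule at the root.

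For the base cases I would argue as follows. If the last rule is (axiom $\in$), then $M = x$ with $x : \sigma \in \Gamma$; since $\Gamma \subseteq \Gamma'$ we get $x : \sigma \in \Gamma'$, so (axiom $\in$) also yields $\Gamma' \vdash_{\TCL} x : \sigma$. The axioms (axiom $S$), (axiom $K$), (axiom $I$) do not inspect the basis at all, so they apply over $\Gamma'$ just as they do over $\Gamma$, yielding the required judgements for $\CS$, $\CK$, $\CI$ immediately.

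For the inductive step the only possibility is ($\rightarrow$ elim): the derivation ends with premises $\Gamma \vdash_{\TCL} M_1 : \sigma_1 \rightarrow \sigma$ and $\Gamma \vdash_{\TCL} M_2 : \sigma_1$, concluding $\Gamma \vdash_{\TCL} M_1 M_2 : \sigma$ with $M = M_1 M_2$. By the induction hypothesis applied to each subderivation with the same extended basis $\Gamma' \supseteq \Gamma$, we obtain $\Gamma' \vdash_{\TCL} M_1 : \sigma_1 \rightarrow \sigma$ and $\Gamma' \vdash_{\TCL} M_2 : \sigma_1$. One more application of ($\rightarrow$ elim) gives $\Gamma' \vdash_{\TCL} M_1 M_2 : \sigma$, closing the induction.

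There is essentially no obstacle here: the system $CL_\rightarrow$ has no binders and no rule that removes or freshens assumptions from the basis, so enlarging the basis can never invalidate an existing derivation. The only point meriting a brief remark is that bases are defined in Definition~\ref{def:statement...} as sets of declarations \emph{with distinct term variables as subjects}; the statement tacitly assumes $\Gamma'$ is itself a well-formed basis, and under that assumption the argument above goes through verbatim.
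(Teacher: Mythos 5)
Your proof is correct and follows exactly the approach the paper uses (the paper simply states ``by induction on the length of derivation $\Gamma \vdash_{\TCL} M : \sigma$'' and omits the details you supply). The case analysis over the rules of Figure~\ref{fig: type assignment} is the intended argument, so nothing further is needed.
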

\begin{proof}
By induction on the length of derivation $\Gamma \vdash_{\TCL} M : \sigma$.
\end{proof}

\vspace{0.5cm}

For a set $X$ of term variables, we write $\Gamma \upharpoonright X = \{ x : \sigma \in \Gamma \mid x \in X \}$.

\begin{proposition}\label{lemma:free variable lemma}\begin{enumerate}
\item If $\Gamma \vdash_{\TCL} M : \sigma$, then $FV(M) \subseteq dom (\Gamma).$
\item If $\Gamma \vdash_{\TCL} M :\sigma$, then $\Gamma \upharpoonright FV(M) \vdash_{\TCL} M  :\sigma $.
\end{enumerate}
\end{proposition}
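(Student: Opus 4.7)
My plan is to prove both parts by induction on the structure of the derivation $\Gamma \vdash_{\TCL} M : \sigma$, using the rules in Figure~\ref{fig: type assignment}. The two statements are tightly parallel, so the same case analysis works for each; only the inductive step for ($\rightarrow$ elim) in part (2) requires a small extra observation (an appeal to weakening).

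For part (1), I would proceed as follows. In the base case (axiom $\in$), if $\Gamma \vdash_{\TCL} x : \sigma$ is concluded from $x : \sigma \in \Gamma$, then $FV(x) = \{x\} \subseteq dom(\Gamma)$. For the axioms for $\CS$, $\CK$, $\CI$, the subject is a constant with no free variables, so $FV(\CS) = FV(\CK) = FV(\CI) = \emptyset \subseteq dom(\Gamma)$ trivially. For ($\rightarrow$ elim), if $\Gamma \vdash_{\TCL} MN : \tau$ follows from $\Gamma \vdash_{\TCL} M : \sigma \rightarrow \tau$ and $\Gamma \vdash_{\TCL} N : \sigma$, the induction hypothesis gives $FV(M), FV(N) \subseteq dom(\Gamma)$, and hence $FV(MN) = FV(M) \cup FV(N) \subseteq dom(\Gamma)$.

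For part (2), the same case split applies. In the base case, $\Gamma \upharpoonright \{x\} = \{x : \sigma\}$, and the judgment $\{x : \sigma\} \vdash_{\TCL} x : \sigma$ is again an instance of (axiom $\in$). For the combinator axioms, $FV$ is empty so $\Gamma \upharpoonright \emptyset = \emptyset$, and the axioms ($S$), ($K$), ($I$) can be applied in any basis, in particular the empty one. The interesting step is ($\rightarrow$ elim): from $\Gamma \vdash_{\TCL} MN : \tau$ via $\Gamma \vdash_{\TCL} M : \sigma \rightarrow \tau$ and $\Gamma \vdash_{\TCL} N : \sigma$, the induction hypothesis yields $\Gamma \upharpoonright FV(M) \vdash_{\TCL} M : \sigma \rightarrow \tau$ and $\Gamma \upharpoonright FV(N) \vdash_{\TCL} N : \sigma$. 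Since $FV(M) \subseteq FV(MN)$ and $FV(N) \subseteq FV(MN)$, we have $\Gamma \upharpoonright FV(M) \subseteq \Gamma \upharpoonright FV(MN)$ and $\Gamma \upharpoonright FV(N) \subseteq \Gamma \upharpoonright FV(MN)$. Applying weakening (Proposition~\ref{lemma: weakening lemma}) to both premises and then ($\rightarrow$ elim) gives $\Gamma \upharpoonright FV(MN) \vdash_{\TCL} MN : \tau$, as required.

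There is no real obstacle here; the argument is entirely routine structural induction. The only subtlety worth mentioning is ensuring that $\Gamma \upharpoonright X$ is itself a valid basis, i.e.\ that restricting by $X$ does not introduce duplicate declarations; but this is immediate since $\Gamma$ itself is assumed to have distinct subjects by Definition~\ref{def:statement...}(iii). Part (1) is also used implicitly in part (2) to justify that $\Gamma \upharpoonright FV(M)$ contains enough declarations, though in fact the weakening argument bypasses this need entirely.
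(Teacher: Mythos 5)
Your proposal is correct and follows exactly the route the paper intends: the paper's proof is simply ``by induction on the length of the derivation,'' and your case analysis (the three combinator axioms, (axiom $\in$), and ($\rightarrow$ elim) with an appeal to weakening, Proposition~\ref{lemma: weakening lemma}, in part (2)) is the standard way to carry that induction out.
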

\begin{proof}
$1), 2)$ By induction on the length of  derivation $\Gamma \vdash_{\TCL} M : \sigma$.
\end{proof}

\vspace{0.5em}
\section{Logic of Combinatory Logic: its syntax and semantics}\label{sec:LCL}
\vspace{0.5em}

In this section we introduce the {\em logic of combinatory logic}, denoted by \LCL, as a logical system obtained by expanding simply typed combinatory logic with classical logical connectives of negation and implication. We define its syntax, axiomatic system and semantics.

\vspace{0.5em}
\subsection{Syntax \LCL}
\vspace{0.5em}

The system \LCL \ is the classical propositional extension of simply typed combinatory logic. 
The language of the logic \LCL \ is generated by the following grammar
\[ \alpha,\beta := M : \sigma \mid \neg \alpha \mid \alpha \Rightarrow \beta \]
where  $M : \sigma \in CL_\rightarrow$. 
 Other classical propositional connectives $\wedge, \vee, \Leftrightarrow$ are defined in the standard way. The formulas of the logic \LCL \ will be denoted by $\alpha, \beta, \ldots$ possibly with subscripts.   We denote $\alpha \wedge \neg \alpha$ by $\bot$.

The logic we propose can be seen as a first step towards formalization of meta-language of simply typed combinatory logic. For example, in simply typed combinatory logic we have that {\em if  we can derive $M : \sigma \rightarrow \tau$ and $N : \sigma$  from a basis $\Gamma$, then we can derive $MN : \tau$  from $\Gamma$}, and in \LCL \ this can be described by the formula \[(M : \sigma \rightarrow \tau \wedge N : \sigma)  \Rightarrow MN : \tau.\] 

\vspace{0.5em}
\subsection{Axiomatization  \LCL }\label{sec:axiomatization}
\vspace{0.5em}

Hereinafter, we present the axiomatic system for the logic of combinatory logic \LCL. The axiomatic system of \LCL \ is obtained by combining the axiomatic system for classical propositional logic and type assignment system for simply typed combinatory logic. The {\em axiomatic system} of \LCL\  consists of axiom schemes  and one inference rule presented in Fig.~\ref{figure:axiomatization}, where 
\begin{itemize}
\item axioms (Ax $1$), (Ax $2$) and (Ax $3$) are non-logical axioms corresponding to primitive combinators
\item axioms (Ax $4$), (Ax $5$) correspond to rules of simply typed combinatory logic
\item axioms (Ax $6$), (Ax $7$) and (Ax $8$) are logical axioms of classical propositional logic;
\item the rule (MP) is Modus Ponens.
\end{itemize}

\begin{center}
\begin{figure}[!t]
\centering
Axiom schemes:

\bigskip

\begin{tabular}{c l}
(Ax $1$) & $\CS : (\sigma \rightarrow (\tau \rightarrow \rho)) \rightarrow ((\sigma \rightarrow \tau) \rightarrow (\sigma \rightarrow\rho))$\\
(Ax $2$) & $\CK : \sigma \rightarrow (\tau \rightarrow \sigma)$\\
(Ax $3$) & $\CI : \sigma \rightarrow \sigma$\\
(Ax $4$) & $(M :\sigma \rightarrow \tau) \Rightarrow ((N : \sigma) \Rightarrow (MN : \tau))$, \quad  $M :\sigma \rightarrow \tau, N : \sigma, MN : \tau \in {CL_\rightarrow}  $  \\
(Ax $5$) & $M : \sigma \Rightarrow N : \sigma$, if $M =_{w, \eta} N$,  $M : \sigma, N : \sigma \in {CL_\rightarrow}$ \\
(Ax $6$) & $\alpha \Rightarrow (\beta \Rightarrow \alpha)$\\
(Ax $7$) & $(\alpha \Rightarrow (\beta \Rightarrow \gamma)) \Rightarrow ((\alpha \Rightarrow \beta) \Rightarrow (\alpha \Rightarrow \gamma))$\\
(Ax $8$) &  $(\neg  \alpha \Rightarrow \neg \beta) \Rightarrow ((\neg  \alpha \Rightarrow \beta) \Rightarrow  \alpha)$
\end{tabular}

\bigskip

Inference rule:

\bigskip

\begin{tabular}{c}
\AxiomC{$\alpha \Rightarrow \beta$}
\AxiomC{$\alpha$}
\RightLabel{(MP)}
\BinaryInfC{$\beta$}
\DisplayProof\bigskip 
\end{tabular}
\caption{Axiom schemes and inference rules for \LCL}
\label{figure:axiomatization}
\end{figure}
\end{center}

 Please observe that for every axiom (rule) of type assignment system for $CL_{\rightarrow}$ (Fig.~\ref{fig: type assignment}), except for (axiom $\in$), there is an axiom in Fig.~\ref{figure:axiomatization} that corresponds to that axiom (rule).

\begin{definition}\label{def:proof} Let $T$ be a set of \LCL-formulas and $\alpha$ a formula. A formula $\alpha$ can be derived from $T$, denoted by $T \vdash \alpha$, if there exists a sequence of formulas $\alpha_0, \alpha_1  \ldots, \alpha_n$ such that $\alpha_n$ is the formula $\alpha$ and for every $i \in \{0, 1, \ldots, n\}$, $\alpha_i$ is an axiom instance, or $\alpha_i \in T$ or $\alpha_i$ is a formula which can be derived by the inference rule (MP) applied to some previous members of the sequence.
\end{definition}

If $\emptyset \vdash \alpha$, then the formula $\alpha$ is called  a {\em theorem} and we write $\vdash \alpha$.

\begin{example} We will prove that $M : \sigma \rightarrow \tau, N : \sigma \vdash MN : \tau$ holds,  whenever $M : \sigma \rightarrow \tau$, $N : \sigma $ and $MN : \tau$ are \LCL-formulas, i.e. $M : \sigma \rightarrow \tau, N : \sigma, MN : \tau \in {CL_\rightarrow}$. We write the proof as the sequence of formulas, starting with the formulas $M : \sigma \rightarrow \tau$ and $N : \sigma$ which are  hypothesis.
 \begin{align*}
 1. \quad & {M  : \sigma \rightarrow \tau}, \text{ hypothesis}, \\
 2. \quad & {N : \sigma}, \text{ hypothesis}, \\
 3. \quad & (M: \sigma \rightarrow \tau) \Rightarrow ((N : \sigma) \Rightarrow (MN : \tau)), 
\text{ an instance of (Ax 4)},\\
 4. \quad & (N : \sigma) \Rightarrow (MN : \tau), \text{ from }  (1) \text{ and }  (3) \text{ by rule (MP)},\\
 5. \quad & MN : \tau, \text{ from } (2) \text{ and } (4) \text{ by rule (MP)}.
 \end{align*}
 \end{example}
 
Please recall that we have introduced the logic \LCL \ as an extension of simply typed combinatory logic. As we have already observed, for every rule of Fig.~\ref{fig: type assignment}, except the first one, (axiom $\in$), there is an axiom in Fig.~\ref{figure:axiomatization}. On the other hand, (axiom $\in$): if $x:\sigma \in \Gamma$, then  $\Gamma \vdash_{\TCL} x : \sigma$, is described by  Definition~\ref{def:proof}. As a consequence, we have the following proposition.

\begin{proposition}\label{proposition:from cl to extension}
If $\Gamma \vdash_{\TCL} M : \sigma$, then $\Gamma \vdash M : \sigma$.
 \end{proposition}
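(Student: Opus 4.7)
The plan is to proceed by induction on the length (equivalently, structure) of the derivation of $\Gamma \vdash_{\TCL} M : \sigma$ in the type assignment system of Fig.~\ref{fig: type assignment}. The idea is simply to translate each typing rule into the corresponding axiom scheme of Fig.~\ref{figure:axiomatization}, as already foreshadowed by the remark preceding the proposition.

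For the base cases, I would handle the four axioms separately. When the derivation ends with (axiom $\in$), so that $x : \sigma \in \Gamma$ and the conclusion is $\Gamma \vdash_{\TCL} x : \sigma$, the formula $x : \sigma$ lies in $T = \Gamma$, hence by clause of Definition~\ref{def:proof} that allows any member of $T$ to appear in a proof sequence we immediately obtain $\Gamma \vdash x : \sigma$. When the derivation ends with (axiom $S$), (axiom $K$) or (axiom $I$), the corresponding conclusion is literally an instance of (Ax $1$), (Ax $2$) or (Ax $3$), respectively, so it is an axiom of \LCL\ and therefore derivable from any $\Gamma$.

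For the inductive step, the only rule to consider is $(\rightarrow \text{elim})$. Suppose the last rule applied gives $\Gamma \vdash_{\TCL} MN : \tau$ from the premises $\Gamma \vdash_{\TCL} M : \sigma \rightarrow \tau$ and $\Gamma \vdash_{\TCL} N : \sigma$. By the induction hypothesis, $\Gamma \vdash M : \sigma \rightarrow \tau$ and $\Gamma \vdash N : \sigma$ in \LCL. Since all three statements $M : \sigma \rightarrow \tau$, $N : \sigma$ and $MN : \tau$ belong to $CL_\rightarrow$ (they are derivable in $CL_\rightarrow$ by assumption and by the rule itself), the side condition of (Ax $4$) is met, so $(M : \sigma \rightarrow \tau) \Rightarrow ((N : \sigma) \Rightarrow (MN : \tau))$ is an axiom instance. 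Two applications of (MP) then yield $\Gamma \vdash MN : \tau$, closing the induction.

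There is no real obstacle: the proposition is essentially bookkeeping, asserting that the axiomatic system of \LCL\ subsumes the typing rules of $CL_\rightarrow$. The one point that deserves a careful check is the side condition on (Ax $4$), which must be verified using the fact that the premises and conclusion of $(\rightarrow \text{elim})$ are themselves $CL_\rightarrow$-derivable statements, and hence genuine \LCL-formulas of the form $P : \rho$ with $P : \rho \in CL_\rightarrow$.
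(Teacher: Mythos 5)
Your proof is correct and follows exactly the route the paper takes: induction on the $CL_\rightarrow$ derivation, translating (axiom $\in$) via membership in $\Gamma$, (axiom $S$), (axiom $K$), (axiom $I$) into (Ax $1$)--(Ax $3$), and $(\rightarrow\text{elim})$ into (Ax $4$) plus two uses of (MP). The paper states this proof in one line ("by induction on the derivation"), and your write-up, including the check of the side condition on (Ax $4$), is a faithful elaboration of the same argument.
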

 \begin{proof}
 By induction on the derivation of $\Gamma \vdash_{\TCL} M : \sigma$ in $CL_{\rightarrow}$.
 \end{proof}
 \vspace{0.5cm}

\begin{definition}\label{def:consistent set} A set of \LCL-formulas $T$ is consistent if there exists at least a formula $\alpha$ which is not  derivable 
from $T$.  Otherwise, $T$ is inconsistent.
\end{definition}

Alternatively, we can say that $T$ is inconsistent if and only if $T \vdash \bot$. We will write $T, \alpha \vdash \beta$, instead of $T \cup \{\alpha \} \vdash \beta$. 

\begin{theorem}[Deduction theorem]\label{thm:deduction} Let $T$ be a set of formulas and $\alpha, \beta$ formulas of \LCL. If $T, \alpha \vdash \beta$, then $T \vdash \alpha \Rightarrow \beta$.
\end{theorem}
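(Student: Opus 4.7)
The plan is to follow the standard Hilbert-style proof of the Deduction Theorem, proceeding by induction on the length $n$ of a derivation $\alpha_0, \alpha_1, \ldots, \alpha_n$ with $\alpha_n = \beta$ witnessing $T, \alpha \vdash \beta$. The reason this argument goes through cleanly is that (MP) is the only inference rule in Figure~\ref{figure:axiomatization}, so by Definition~\ref{def:proof} every step in the derivation is either an axiom instance, a member of $T \cup \{\alpha\}$, or the conclusion of (MP) applied to two earlier formulas in the sequence. Since (Ax~6), (Ax~7) and (MP) are exactly the fragment of classical propositional logic driving the classical proof, the non-logical axioms (Ax~1)--(Ax~5) and the internal structure of atomic formulas $M : \sigma$ will play no role whatsoever.

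For the base case, I consider the three possibilities for a one-step derivation of $\beta$ from $T \cup \{\alpha\}$. If $\beta$ is an axiom instance or a member of $T$, then $T \vdash \beta$, and combining this with the instance $\beta \Rightarrow (\alpha \Rightarrow \beta)$ of (Ax~6) via (MP) gives $T \vdash \alpha \Rightarrow \beta$. If instead $\beta$ coincides with $\alpha$, I need to establish $\vdash \alpha \Rightarrow \alpha$; this is a standard theorem derived from (Ax~6) and (Ax~7) alone by instantiating (Ax~7) as $(\alpha \Rightarrow ((\alpha \Rightarrow \alpha) \Rightarrow \alpha)) \Rightarrow ((\alpha \Rightarrow (\alpha \Rightarrow \alpha)) \Rightarrow (\alpha \Rightarrow \alpha))$ and discharging both antecedents via (MP) with appropriate instances of (Ax~6).

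For the inductive step, suppose the claim holds for derivations of length less than $n$, and that $\beta = \alpha_n$ was obtained by (MP) from two earlier formulas of the form $\gamma$ and $\gamma \Rightarrow \beta$, each derivable from $T \cup \{\alpha\}$ in fewer than $n$ steps. By the inductive hypothesis, $T \vdash \alpha \Rightarrow \gamma$ and $T \vdash \alpha \Rightarrow (\gamma \Rightarrow \beta)$. Taking the instance $(\alpha \Rightarrow (\gamma \Rightarrow \beta)) \Rightarrow ((\alpha \Rightarrow \gamma) \Rightarrow (\alpha \Rightarrow \beta))$ of (Ax~7) and applying (MP) twice yields $T \vdash \alpha \Rightarrow \beta$, completing the induction.

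I do not expect a genuine obstacle here: the argument is entirely propositional and does not interact with the combinatory-logic-specific axioms (Ax~1)--(Ax~5), which behave indistinguishably from generic hypotheses in the induction. The only mild care required is at the $\beta = \alpha$ sub-case of the base, where one must remember that $\vdash \alpha \Rightarrow \alpha$ is itself a derived theorem rather than an axiom; once this lemma is recorded, the remainder is bookkeeping.
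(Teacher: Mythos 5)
Your proposal is correct and follows essentially the same route as the paper's proof: induction on the length of the derivation, with the base case split into axiom/member-of-$T$/$\beta=\alpha$ sub-cases handled via (Ax~6) and the derived theorem $\alpha \Rightarrow \alpha$, and the inductive (MP) step discharged by (Ax~7). The only difference is that you spell out the standard derivation of $\vdash \alpha \Rightarrow \alpha$, which the paper merely asserts as a theorem of \LCL.
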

\begin{proof}
We prove the statement by induction on the length  of the proof of $T, \alpha \vdash \beta$. If the length of a proof is equal to $1$, then $\beta$ is either an axiom or $\beta \in T \cup \{ \alpha\}$. 

If $\beta$ is an axiom, then the proof proceeds as follows:
\begin{align*}
1. \quad & T \vdash \beta, \text{ since } \beta \text{ is an axiom} \\
2.  \quad & T \vdash \beta \Rightarrow (\alpha \Rightarrow \beta),  \text{ from } \text{(Ax $6$)}\\
3. \quad & T \vdash \alpha \Rightarrow \beta, \text{ from } (1) \text{ and } (2) \text{ by rule (MP)}.
\end{align*} 

If $\beta \in T$, then we have 
\begin{align*}
1. \quad &  T \vdash \beta, \text{ since } \beta \in T \\
2. \quad & T \vdash \beta \Rightarrow (\alpha \Rightarrow \beta), \text{ from } \text{(Ax $6$)}\\
3. \quad & T \vdash \alpha \Rightarrow \beta, \text{ from } (1) \text{ and } (2) \text{ by rule (MP)}. 
\end{align*}

If $\alpha = \beta$ ($\beta \in\{\alpha \}$), then $T \vdash \alpha \Rightarrow \alpha$ holds because $\alpha \Rightarrow \alpha$ is a theorem  in \LCL. 

Suppose that the length of the proof is $k > 1$. If the formula $\beta$ is an axiom or belongs to the set $T \cup\{\alpha \}$ then the proof proceeds as above. Let the formula $\beta$ be derived from $T \cup \{ \alpha\}$ by an application of  inference rule (MP). 
By the induction hypothesis the statement holds for any proof whose length is less than $k$. 

If $\beta$ is obtained by an application of rule (MP) on formulas $\alpha_1$ and $\alpha_1 \Rightarrow \beta$ such that $T, \alpha \vdash \alpha_1$ and $T, \alpha \vdash \alpha_1 \Rightarrow \beta$, then by the induction hypothesis we have 
 $ T \vdash \alpha \Rightarrow \alpha_1$ and $ T \vdash \alpha \Rightarrow (\alpha_1 \Rightarrow \beta)$. The proof proceeds as follows:
\begin{align*}
1. & \quad  T \vdash \alpha \Rightarrow \alpha_1, \text{ by the induction hypothesis}  \\
2. & \quad   T \vdash \alpha \Rightarrow (\alpha_1 \Rightarrow \beta),  \text{ by the induction hypothesis} \\
3. &  \quad T \vdash (\alpha \Rightarrow (\alpha_1 \Rightarrow \beta)) \Rightarrow ((\alpha\Rightarrow \alpha_1) \Rightarrow (\alpha \Rightarrow \beta)),  \text{ from (Ax $7$)} \\
4. & \quad  T \vdash (\alpha\Rightarrow \alpha_1) \Rightarrow (\alpha \Rightarrow \beta), \text{ from } (2) \text{ and } (3)  \text{ by rule (MP)}  \\
5. & \quad T \vdash \alpha \Rightarrow \beta, \text{ from } (1) \text{ and } (4) \text{ by rule (MP)}.
\end{align*}

This concludes the proof.
\end{proof}

Deduction theorem will play an important role in the proof of  completeness of the axiomatization of \LCL.

\vspace{0.5em}
\subsection{Semantics \LCL}\label{sec:semantics}
\vspace{0.5em}

The first models of pure untyped combinatory logic were introduced by Dana Scott in 1969, \cite{Scott69a, Scott69b}. Since then, different models have been proposed both for untyped and typed combinatory logic. One of the first models that have been introduced was constructed from the language of combinatory logic and is called {\em a term model} \cite{Barendregt71, Barendregt20}. 
 Algebraic and set theoretical - relational and operational - semantics for dual and symmetric combinatory calculi are  introduced  in \cite{Bimbo04}. Further, operational models for untyped combinatory logic and models for typed combinatory logic, combinatory algebras and relational models, have been presented in \cite{Bimbo12}. A wide variety of models can be found in the literature \cite{B85, BDS13, AF20}. 

The most usual approach  used to define models for typed combinatory logic is to employ models of untyped  combinatory logic and to interpret types as subsets of the untyped model. However, different approaches have emerged in the past decades.  In \cite{MitchellM87}, a Kripke-style semantics for simply typed $\lambda$-calculus was proposed, and an interpretation of a typing sequent was defined  considering the term and its type at the same time. In \cite{KasterovicG20}, this approach was combined with the general concept of a model of untyped $\lambda$-calculus and  a Kripke-style semantics for full simply typed $\lambda$-calculus was introduced, where the meaning of a term is defined without considering its type. Kripke-style models for full simply typed combinatory  logic have also been introduced in \cite{KasterovicG20} and the key point in some proofs was the translation of $\lambda$-terms into terms of combinatory logic. The semantics we propose is inspired by the semantics introduced in \cite{MitchellM87} and \cite{KasterovicG20}.

In this section we define a semantics for the logic of combinatory logic, \LCL. First, we introduce the notion of an applicative structure for \LCL.

\begin{definition}\label{def:applicative structure}
An applicative structure for \LCL \ is a tuple $ \mathcal{M} = \langle D, \{A^\sigma \}_\sigma ,\cdot, \cs,  \ck, \ci \rangle$ where
\begin{itemize}
\item $D$ is a non-empty set, called {\em domain};
\item $\{A^\sigma \}_\sigma$ is a family of sets indexed by $\sigma$ such that $A^\sigma \subseteq D$ for all $\sigma$, $\{A^\sigma \}_\sigma$ is an abbreviation for $\{A^\sigma\}_{\sigma \in {\sf Types}}$;
\item $\cdot$ is a binary operation on $D$ ($\cdot : D \times D \rightarrow D$), which is extensional: for $d_1, d_2 \in D$, $d_1 = d_2$ whenever $(\forall e \in D) (d_1 \cdot e = d_2 \cdot e)$, and it holds that $\cdot : A^{\sigma \rightarrow \tau} \times A^{\sigma} \rightarrow A^\tau$ for every $\sigma, \tau \in {\sf Types}$;
\item $\cs$ is an element of the domain $D$, such that for every $\sigma, \tau, \rho \in {\sf Types}$, \begin{equation}
\cs \in A^{(\sigma \rightarrow (\tau \rightarrow \rho)) \rightarrow ((\sigma \rightarrow \tau) \rightarrow (\sigma \rightarrow \rho))}\label{eq:cs 1}
\end{equation}  and for every $d, e, f \in D$, \begin{equation}
((\cs \cdot d) \cdot e ) \cdot f = (d \cdot f)\cdot (e \cdot f)\label{eq:cs 2}
\end{equation}
\item $\ck$ is an element of the domain $D$ such that for every $\sigma, \tau \in{\sf Types}$ \begin{equation}
\ck \in A^{\sigma \rightarrow (\tau \rightarrow \sigma)}\label{eq:ck 1}
\end{equation} and for every $d, e \in D$, \begin{equation}
(\ck \cdot d) \cdot e = d \label{eq:ck 2}
\end{equation}
\item  $\ci$ is an element of the domain $D$ such that for every $\sigma \in {\sf Types}$ \begin{equation}
\ci \in A^{\sigma \rightarrow \sigma}
\label{eq:ci 1}
\end{equation} and for every $d \in D$, \begin{equation}
\ci \cdot d = d\label{eq:ci 2}
\end{equation}

\end{itemize}
\end{definition}

Elements $\cs, \ck, \ci$ are unique due to the extensionality of the operation $\cdot$.

In order to interpret $CL$-terms in an applicative structure we have to provide a valuation of term variables to that structure, hence we introduce a notion of an environment for an applicative structure. 
   
\begin{definition}\label{def:environment} Let $\mathcal{M}$ be an applicative structure. An environment $\rho$ for $\mathcal{M}$ is a  map from the set of term variables to the domain of the applicative structure $\mathcal{M}$, $\rho : V \rightarrow D$.
\end{definition}

A similar approach was used in \cite{KasterovicG20,MitchellM91}, where the notions of applicative structure and environment was used to define a Kripke-style semantics for typed $\lambda$-calculus. 

If $\rho$ is an environment for an applicative structure $\mathcal{M}$ and $d$ is an element from the domain of $\mathcal{M}$, then $\rho(x := d)$  denotes the environment for $\mathcal{M}$ for which

\[\rho(x := d)(y) = \begin{cases}
d, & y = x\\
\rho(y), & y \neq x
\end{cases} \] 
\begin{definition}\label{def:model} An \LCL-model is a tuple $\mathcal{M}_\rho = \langle \mathcal{M}, \rho \rangle$, where $\mathcal{M}$ is an applicative structure and $\rho$ is an environment for $\mathcal{M}$.
\end{definition}

An environment $\rho$ gives an interpretation (meaning) of  term variables in an applicative structure. We now  define the meaning of a term. In order to do that, we extend the environment (valuation of term variables) to the interpretation of all $CL$-terms.

\begin{definition}\label{def:meaning of term} Let $\mathcal{M}$ be an applicative structure and $\rho$ and environment for $\mathcal{M}$. We define the {\em interpretation (meaning) of a term} $M$ in the environment $\rho$, denoted by $\intt{M}_\rho$, inductively as follows:
\begin{itemize}
\item $\intt{x}_\rho = \rho(x)$;
\item $\intt{\CS}_\rho = \cs$;
\item $\intt{\CK}_\rho = \ck$;
\item $\intt{\CI}_\rho = \ci$;
\item $\intt{MN}_\rho = \intt{M}_\rho \cdot \intt{N}_\rho$.
\end{itemize}
\end{definition}

The interpretation map $\intt{\cdot }_\rho$ is well defined, that is the meaning of every  $CL$-term in an environment $\rho$ is defined, due to the existence of elements $\cs, \ck, \ci$ in every 
applicative  structure. 

The meaning of the term does not depend on the variables that do not occur in the term, as it is shown in the next lemma.

\begin{lemma}\label{lemma:diff env same meaning}
Let $\mathcal{M}$ be an applicative structure, $\rho_1$ and $\rho_2$ two environments for $\mathcal{M}$ and $M$ a $CL$-term. If $\rho_1 (x) = \rho_2 (x)$ for every $x$ that occurs in $M$, then $\intt{M}_{\rho_1} = \intt{M}_{\rho_2}$.
\end{lemma}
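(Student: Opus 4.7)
The plan is to proceed by structural induction on the $CL$-term $M$, following the grammar $M, N ::= x \mid \CS \mid \CK \mid \CI \mid MN$ from Section~\ref{sec:preliminaries} and mirroring the clauses of Definition~\ref{def:meaning of term}. The hypothesis that $\rho_1$ and $\rho_2$ agree on every variable occurring in $M$ is clearly monotone with respect to subterms, which is exactly what makes the induction go through.

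For the base cases there are four subcases. If $M = x$, then $x$ itself occurs in $M$, so the hypothesis gives $\rho_1(x) = \rho_2(x)$, and by Definition~\ref{def:meaning of term} we have $\intt{x}_{\rho_1} = \rho_1(x) = \rho_2(x) = \intt{x}_{\rho_2}$. If $M$ is one of the constants $\CS$, $\CK$, or $\CI$, the interpretation is the fixed element $\cs$, $\ck$, or $\ci$ of the applicative structure, which is independent of the environment, so the equality is immediate.

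For the inductive step, let $M = PQ$. Every variable occurring in $P$ also occurs in $PQ$, and likewise for $Q$; hence $\rho_1$ and $\rho_2$ agree on all variables of $P$ and on all variables of $Q$. By the induction hypothesis applied separately to $P$ and to $Q$, we obtain $\intt{P}_{\rho_1} = \intt{P}_{\rho_2}$ and $\intt{Q}_{\rho_1} = \intt{Q}_{\rho_2}$. Using the application clause of Definition~\ref{def:meaning of term},
\[
\intt{PQ}_{\rho_1} = \intt{P}_{\rho_1} \cdot \intt{Q}_{\rho_1} = \intt{P}_{\rho_2} \cdot \intt{Q}_{\rho_2} = \intt{PQ}_{\rho_2},
\]
which closes the induction.

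There is essentially no obstacle here: the statement is the standard coincidence lemma for a first-order syntax without binders, and since $CL$-terms have no variable-binding constructs the usual subtleties with $\alpha$-equivalence or capture avoidance do not arise. The only thing to be careful about is to formulate the induction hypothesis over the set of variables occurring in the respective subterm, so that its application to $P$ and to $Q$ in the application case is justified by the inclusion of their variable sets into that of $PQ$.
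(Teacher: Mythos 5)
Your proof is correct and follows exactly the route the paper intends: the paper's own proof is just the one-line remark ``by induction on the structure of the term $M$,'' and your argument is the standard, fully spelled-out version of that structural induction, with the right observation that the agreement hypothesis restricts to subterms in the application case.
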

\begin{proof}
By induction on the structure of term $M$.
\end{proof}

The following lemma states an important property of the meaning of term obtained by substitution, which will be useful in the sequel.

\begin{lemma}[Substitution lemma]\label{lemma:substitution lemma}
Let  $M, N$ be $CL$-terms and $\rho$ an environment.
Then,
\[\intt{M\{N / x \}}_\rho  = \intt{M}_{\rho(x := \intt{N}{_{\rho}})}\]
\end{lemma}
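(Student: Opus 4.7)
The plan is to prove the Substitution Lemma by structural induction on the $CL$-term $M$, exploiting that every $CL$-term is either a variable, one of the primitive combinators $\CS, \CK, \CI$, or an application $PQ$, which matches the clauses of Definition~\ref{def:meaning of term} exactly. The only nontrivial manipulation is the variable case; the remaining cases are either trivial (constants) or immediate uses of the induction hypothesis together with the application clause $\intt{PQ}_\rho = \intt{P}_\rho \cdot \intt{Q}_\rho$.

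First I would dispose of the variable cases. If $M = x$, then $M\{N/x\} = N$ by the definition of substitution, so $\intt{M\{N/x\}}_\rho = \intt{N}_\rho$, while on the other side the modified environment gives $\intt{x}_{\rho(x := \intt{N}_\rho)} = \rho(x := \intt{N}_\rho)(x) = \intt{N}_\rho$ by the definition of the updated environment. If $M = y$ with $y \neq x$, then $M\{N/x\} = y$, so $\intt{M\{N/x\}}_\rho = \rho(y)$, while $\intt{y}_{\rho(x := \intt{N}_\rho)} = \rho(x := \intt{N}_\rho)(y) = \rho(y)$, since the update at $x$ leaves the value at $y$ unchanged.

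Next, for each of the constant cases $M \in \{\CS, \CK, \CI\}$, the substitution acts trivially, $M\{N/x\} = M$, and the interpretation of a primitive combinator does not depend on the environment at all, being fixed to the distinguished elements $\cs, \ck, \ci$ by Definition~\ref{def:meaning of term}; hence both sides are equal. For the application case $M = PQ$, I would apply the definition of substitution, obtaining $M\{N/x\} = P\{N/x\}\, Q\{N/x\}$, and then unfold the application clause:
\[
\intt{M\{N/x\}}_\rho = \intt{P\{N/x\}}_\rho \cdot \intt{Q\{N/x\}}_\rho.
\]
Applying the induction hypothesis to each factor yields
\[
\intt{P\{N/x\}}_\rho \cdot \intt{Q\{N/x\}}_\rho = \intt{P}_{\rho(x := \intt{N}_\rho)} \cdot \intt{Q}_{\rho(x := \intt{N}_\rho)} = \intt{PQ}_{\rho(x := \intt{N}_\rho)},
\]
the last equality again by Definition~\ref{def:meaning of term}.

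I do not expect any genuine obstacle: the applicative structure is total, the interpretation map is defined by direct recursion on term syntax, and the primitive combinators are treated as environment-independent constants, which makes the induction go through without any side conditions on free variables such as those that arise in the $\lambda$-calculus analogue (where $\alpha$-conversion must be invoked to avoid capture). The only place one must be careful is to match the definition of environment update used in Definition~\ref{def:environment}, which is why I would explicitly spell out both variable subcases rather than treat them together.
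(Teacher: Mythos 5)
Your proof is correct and follows exactly the route the paper takes: the paper's proof is simply ``by induction on the structure of the term $M$,'' and your case analysis (the two variable subcases, the environment-independent constants $\CS,\CK,\CI$, and the application case via the induction hypothesis) is the standard and intended way to carry that induction out.
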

\begin{proof}
By induction on the structure of term $M$.
\end{proof}

\begin{definition}\label{def:satisfiability} The {\em satisfiability of a formula in a model} $\mathcal{M}_\rho$ is defined as follows:
\begin{itemize}
\item $\mathcal{M}_\rho\models M : \sigma$ if and only if $\intt{M}_\rho \in A^\sigma$;
\item $\mathcal{M}_\rho \models \alpha \wedge \beta $ if and only if $\mathcal{M}_\rho \models \alpha$ and $\mathcal{M}_\rho \models  \beta$;
\item $\mathcal{M}_\rho \models  \neg \alpha$ if and only if it is not true that $\mathcal{M}_\rho \models  \alpha$.
\end{itemize}
\end{definition}

If $\mathcal{M}_\rho \models \alpha$, we say that $\mathcal{M}_\rho$ is  a model of $\alpha$. We  introduce the notion of satisfiability of a set of formulas in a model, and the notion of semantical consequence in the usual way.

\begin{definition}\label{def:semantical consequence}
\begin{enumerate}
\item A set $T$ of \LCL-formulas is satisfied in a model $\mathcal{M}_\rho$, denoted by $\mathcal{M}_\rho \models T$, if and only if every formula $\alpha$ from the set $T$ is satisfied in $\mathcal{M}_\rho$, i.e. $\mathcal{M}_\rho \models \alpha$ for every $\alpha \in T$.
\item A formula $\alpha$ is a semantical consequence of a set $T$, denoted by $T \models \alpha$, if and only if every model of $T$ is a model of $\alpha$, i.e. $\mathcal{M}_\rho \models T$ implies $\mathcal{M}_\rho \models \alpha$.
\end{enumerate}
\end{definition}

An interpretation of a classical propositional formula in a valuation is usually presented as a line (valuation) in the truth table, where atomic formulas in the truth table are propositional letters which appear in the interpreted formula. An environment $\rho$  also defines a line (valuation) in the truth table for \LCL-formula $\alpha$, in which atomic formulas are statements $M:\sigma$, which appear in the formula $\alpha$.

\section{Soundness and completeness of the equational theory}\label{sec:equational soundness and completeness}
\vspace{0.5em}

In this section we prove the soundness and completeness of the equational theory $\mathcal{EQ}^\eta$ (\ref{sec:eqtheory}) w.r.t. the semantics of \LCL\, introduced in 
\ref{sec:semantics}.

In 
\ref{sec:semantics}, we have  introduced models for \LCL, the propositional extension of simply typed combinatory logic, such that the interpretation of every $CL$-term is defined in a model. Therefore,
 the problem of equational soundness and completeness arises. The soundness of the equational theory $\mathcal{EQ}^\eta$ follows from the definition of \LCL-model.

\subsection{Soundness}
\begin{theorem}[Soundness of $\mathcal{EQ}^\eta$]\label{thm:soundness of eq theory} If $M = N$ in provable in $\mathcal{EQ}^\eta$, then $\intt{M}_\rho = \intt{N}_\rho$ for any \LCL-model $\mathcal{M_\rho} = \langle \mathcal{M}, \rho \rangle$.
\end{theorem}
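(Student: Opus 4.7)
The plan is to prove the soundness of $\mathcal{EQ}^\eta$ by induction on the length of the derivation of $M = N$ in $\mathcal{EQ}^\eta$, checking that each axiom and each inference rule of Fig.~\ref{fig:eq-calc} together with the extensionality rule $(ext)$ preserve semantic equality of the interpretations. I fix an arbitrary \LCL-model $\mathcal{M}_\rho = \langle \mathcal{M}, \rho \rangle$ at the outset and argue that $\intt{M}_\rho = \intt{N}_\rho$ holds whenever $M = N$ is derivable.

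For the base cases, the axiom $(id)$ is immediate. The combinator axioms $(S), (K), (I)$ are discharged by direct calculation using Definition~\ref{def:meaning of term} to unfold the interpretation of the applications and then applying equations (\ref{eq:cs 2}), (\ref{eq:ck 2}), (\ref{eq:ci 2}) of Definition~\ref{def:applicative structure}. For instance, for $(S)$ one has $\intt{\CS M N L}_\rho = ((\cs \cdot \intt{M}_\rho) \cdot \intt{N}_\rho) \cdot \intt{L}_\rho = (\intt{M}_\rho \cdot \intt{L}_\rho) \cdot (\intt{N}_\rho \cdot \intt{L}_\rho) = \intt{(ML)(NL)}_\rho$.

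The rules $(sym)$ and $(trans)$ follow from the fact that equality of elements of $D$ is itself symmetric and transitive. The congruence rules $(app-l)$ and $(app-r)$ are immediate because $\cdot$ is a function: from $\intt{M}_\rho = \intt{N}_\rho$ we get $\intt{MP}_\rho = \intt{M}_\rho \cdot \intt{P}_\rho = \intt{N}_\rho \cdot \intt{P}_\rho = \intt{NP}_\rho$, and symmetrically for $(app-r)$.

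The main obstacle, and the only step that is not essentially a one-line calculation, is the extensionality rule $(ext)$. Here the inductive hypothesis gives $\intt{Mx}_{\rho'} = \intt{Nx}_{\rho'}$ for every environment $\rho'$, while the hypothesis $x \notin FV(M) \cup FV(N)$ is needed to make the conclusion independent of the choice of value at $x$. The plan is to fix an arbitrary $d \in D$ and consider the environment $\rho(x := d)$. Unfolding the interpretation gives $\intt{M}_{\rho(x:=d)} \cdot d = \intt{N}_{\rho(x:=d)} \cdot d$. Since $x$ does not occur in $M$ or $N$, Lemma~\ref{lemma:diff env same meaning} yields $\intt{M}_{\rho(x:=d)} = \intt{M}_\rho$ and $\intt{N}_{\rho(x:=d)} = \intt{N}_\rho$, so $\intt{M}_\rho \cdot d = \intt{N}_\rho \cdot d$ for every $d \in D$. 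Finally, invoking the extensionality of the operation $\cdot$ in Definition~\ref{def:applicative structure} concludes $\intt{M}_\rho = \intt{N}_\rho$, closing the induction.
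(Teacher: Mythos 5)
Your proposal is correct and follows essentially the same route as the paper: induction on the length of the derivation, direct calculation for the combinator axioms, congruence of $\cdot$ for the application rules, and for $(ext)$ the same combination of instantiating the induction hypothesis at $\rho(x:=d)$, applying Lemma~\ref{lemma:diff env same meaning}, and invoking extensionality of $\cdot$. You also correctly identify that the induction hypothesis must be quantified over all environments for the $(ext)$ case to go through, which is exactly how the paper handles it.
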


\begin{proof}
We prove the theorem by induction on the length of the proof of $M= N$. 

If $M = N$ is obtained from axiom $(id)$, then terms $M$ and $N$ are identical, so we have $\intt{M}_\rho = \intt{N}_\rho$, for every \LCL-model $\mathcal{M}_\rho = \langle \mathcal{M}, \rho \rangle$.

If $M = N$ falls under axiom $(S)$, then there exist terms $P, Q, R$ such that the term $M$ is of the form  $\CS P Q R$ and the term $N$ is the term $(PR)(QR)$. We obtain
\begin{align*}\intt{\CS PQR}_\rho & = \left(\left(\intt{\CS}_\rho \cdot \intt{P}_\rho \right) \cdot \intt{Q}_\rho \right) \cdot \intt{R}_\rho \\ &  =  \left(\left(\cs \cdot \intt{P}_\rho \right) \cdot \intt{Q}_\rho \right) \cdot \intt{R}_\rho \\
 & = \left(\intt{P}_\rho \cdot \intt{R}_\rho \right) \cdot \left( \intt{Q}_\rho \cdot \intt{R}_\rho \right) \\
  & = \intt{\left(PR \right) \left(QR\right)}_\rho\end{align*}

Similarly, if $M = N$ falls under axiom $(K)$, then $M$ is an application  $\CK P Q$ for some terms $P, Q$ and $N$ is the term $P$, and we have
\[\intt{\CK P Q}_\rho = \left( \intt{\CK}_\rho \cdot \intt{P}_\rho \right) \cdot \intt{Q}_\rho = \left( \ck \cdot \intt{P}_\rho \right) \cdot \intt{Q}_\rho = \intt{P}_\rho \]

  If $M = N$ falls under axiom $(I)$, then the term $M$ is an application $\CI P$ and the term $N$ is a term $P$, and from the definition of the interpretation map and equation (\ref{eq:ci 2}) we obtain
\[\intt{\CI P}_\rho = \intt{\CI}_\rho \cdot \intt{P}_\rho = \ci \cdot \intt{P}_\rho  = \intt{P}_\rho \]

  If $M = N$ is obtained from $N = M$ by rule $\text{(sym)}$, then by the induction hypothesis we have $\intt{N}_\rho = \intt{M}_\rho$  for every \LCL-model $\mathcal{M}_\rho = \langle \mathcal{M}, \rho \rangle$, and the statement is a direct consequence of this.
  
 Next, suppose that $M = N$ is obtained by rule $\text{(trans)}$ applied to $M  = P$ and $P = N$. By the induction hypothesis we get $\intt{M}_\rho = \intt{P}_\rho$ and $\intt{P}_\rho = \intt{N}_\rho$, for every \LCL-model $\mathcal{M}_\rho = \langle \mathcal{M}, \rho \rangle$.  This implies $\intt{M}_\rho = \intt{N}_\rho$.
 
 Let $M = N$ be obtained by rule $\text{(app-l)}$ applied to $L = Q$. Then, terms $M$ and $N$ are of the form $LP$ and $QP$, respectively,  for some term $P$. By the induction hypothesis we have $\intt{L}_\rho = \intt{Q}_\rho$, for every \LCL-model $\mathcal{M}_\rho = \langle \mathcal{M}, \rho \rangle$. Then,
 \[\intt{LP}_\rho = \intt{L}_\rho \cdot \intt{P}_\rho = \intt{Q} _\rho \cdot \intt{P}_\rho = \intt{QP}_\rho.\]
 
 Similarly, if $M = N$ is obtained by rule $\text{(app-r)}$ applied to $L = Q$, then terms $M$ and $N$ are of the form $PL$ and $PQ$, respectively, for some term $P$. By the induction hypothesis we have $\intt{L}_\rho = \intt{Q}_\rho$, for every \LCL-model $\mathcal{M}_\rho = \langle \mathcal{M}, \rho \rangle$.  From the latter and the definition of the interpretation map we get
 \[\intt{PL}_\rho = \intt{P}_\rho \cdot \intt{L}_\rho = \intt{P}_\rho \cdot \intt{Q}_\rho = \intt{PQ}_\rho. \]
 
 Finally, we consider the case where $M = N$ is obtained by rule $\text{(ext)}$ from $Mx = Nx$ such that the variable $x$  appears neither in $M$ nor in $N$. Let $\mathcal{M}_\rho = \langle \mathcal{M}, \rho \rangle$ be an arbitrary model, where $\mathcal{M} = \langle D, \{A^\sigma\}_\sigma, \cdot, \cs, \ck, \ci \rangle$. The goal is to prove that $\intt{M}_\rho$ and $\intt{N}_\rho$ are equal. Let $d$ be an arbitrary element from the domain $D$.  
 By the induction hypothesis we have that  $\intt{Mx}_\rho = \intt{Nx}_\rho$ holds for any model $\mathcal{M}_\rho = \langle\mathcal{M}, \rho \rangle$. Let us consider the model $\langle \mathcal{M}, \rho(x := d)\rangle$.  Then, by the induction hypothesis we have $\intt{Mx}_{\rho(x : = d)} = \intt{Nx}_{\rho(x : = d)}$, which is equivalent to $\intt{M}_{\rho(x := d)}\cdot \intt{x}_{\rho(x : = d)} = \intt{N}_{\rho(x : =d)} \cdot \intt{x}_{\rho(x : = d)}$. Since $\intt{x}_{\rho(x : =d)} = \rho(x : = d) (x) = d $, it follows that $\intt{M}_{\rho(x : =d)} \cdot d = \intt{N}_{\rho(x : =d)}\cdot d$, for an arbitrary element $d$. On the other hand, the variable $x$ does not appear in terms $M$ and $N$, thus we have $\intt{M}_\rho = \intt{M}_{\rho(x := d)}$ and $\intt{N}_\rho = \intt{N}_{\rho(x := d)}$ by Lemma~\ref{lemma:diff env same meaning}. Therefore $\intt{M}_\rho \cdot d = \intt{N}_\rho \cdot d$, for every $d$ from the domain. From the extensionality of the operation $\cdot$ in \LCL-models we conclude $\intt{M}_\rho = \intt{N}_\rho$.
 
 This concludes the proof.
\end{proof}

\vspace{0.5em}
\subsection{Completeness}
\vspace{0.5em}

The converse to Soundness theorem is the so called Completeness theorem. It proves that whenever two terms $M, N$ have the same meaning in every \LCL-model, then they are equal, in the sense that the equation $M = N$ is provable in $\mathcal{EQ}^\eta$.

By $[M]$ we denote  the equivalence class of term $M$ with respect to the equivalence relation generated by the equational theory $\mathcal{EQ}^\eta$, \\ $\displaystyle [M] = \{N \mid M = N \text{ is provable in } \mathcal{EQ}^\eta\}$.

\begin{theorem}[Completeness of $\mathcal{EQ}^\eta$]\label{thm:completeness of eq theory}
If $\intt{M}_\rho = \intt{N}_\rho$ in every \LCL-model, then $M = N$ is provable in $\mathcal{EQ}^\eta$.
\end{theorem}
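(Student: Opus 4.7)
My plan is to prove completeness via a term model construction: build a specific \LCL-model in which the interpretation of a $CL$-term is its own $=_{w,\eta}$-equivalence class, and then apply the hypothesis to that model.

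Concretely, I would let $D = \{[M] \mid M \in CL\}$, the set of all equivalence classes of $CL$-terms under the equivalence relation generated by $\mathcal{EQ}^\eta$. I would define the application operation by $[M] \cdot [N] := [MN]$; this is well-defined by the congruence rules $(\text{app-l})$ and $(\text{app-r})$. For the type components, since no type information is needed to refute equations, I would take $A^\sigma = D$ for every $\sigma \in {\sf Types}$, which makes the membership clauses for $\cs,\ck,\ci$ and the typing constraint on $\cdot$ hold trivially. I set $\cs = [\CS]$, $\ck = [\CK]$, $\ci = [\CI]$. Equations \eqref{eq:cs 2}, \eqref{eq:ck 2} and \eqref{eq:ci 2} follow directly from axioms $(S)$, $(K)$ and $(I)$ of $\mathcal{EQ}^\eta$, since e.g. $(([\CS] \cdot [M]) \cdot [N]) \cdot [L] = [\CS M N L] = [(ML)(NL)] = ([M]\cdot[L])\cdot([N]\cdot[L])$.

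The nontrivial structural requirement is extensionality of $\cdot$, and this is precisely where the rule $(\text{ext})$ enters. Suppose $[M]\cdot[P] = [N]\cdot[P]$ for every $[P] \in D$; I pick a variable $x \notin FV(M) \cup FV(N)$ (possible because $FV(M)\cup FV(N)$ is finite) and specialize to $[P] = [x]$, obtaining $[Mx] = [Nx]$, i.e.\ $Mx =_{w,\eta} Nx$. The rule $(\text{ext})$ then yields $M =_{w,\eta} N$, so $[M] = [N]$. Thus $\mathcal{M} = \langle D, \{A^\sigma\}_\sigma, \cdot, \cs, \ck, \ci\rangle$ is an applicative structure for \LCL\ in the sense of Definition~\ref{def:applicative structure}.

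Now I take the canonical environment $\rho_0 : V \to D$ defined by $\rho_0(x) = [x]$, forming the \LCL-model $\mathcal{M}_{\rho_0} = \langle \mathcal{M},\rho_0\rangle$. A straightforward induction on the structure of $M$ (using the clauses of Definition~\ref{def:meaning of term} and the definitions of $\cs,\ck,\ci$ and $\cdot$ above) shows $\intt{M}_{\rho_0} = [M]$ for every $CL$-term $M$. Consequently, if $\intt{M}_\rho = \intt{N}_\rho$ in every \LCL-model, then in particular it holds in $\mathcal{M}_{\rho_0}$, giving $[M] = [N]$, which by definition means that $M = N$ is provable in $\mathcal{EQ}^\eta$.

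The main obstacle, as expected, is verifying extensionality of the term-model application; all other parts are routine once one notices that ignoring the type stratification (by taking $A^\sigma = D$) makes the semantic clauses on $\cs,\ck,\ci$ collapse to their purely equational content. The rule $(\text{ext})$ is precisely what is needed to match the built-in extensionality assumption on $\cdot$ in Definition~\ref{def:applicative structure}, which is why the completeness theorem is stated for $\mathcal{EQ}^\eta$ rather than for $\mathcal{EQ}$.
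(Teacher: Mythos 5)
Your proposal is correct and follows essentially the same route as the paper: the same term model over $=_{w,\eta}$-equivalence classes, the same application $[M]\cdot[N]=[MN]$ and combinator elements $[\CS],[\CK],[\CI]$, the same use of a fresh variable together with rule $(\text{ext})$ to establish extensionality of $\cdot$, and the same canonical environment $\rho(x)=[x]$ with $\intt{M}_\rho=[M]$. The only difference is your choice $A^\sigma=D$ in place of the paper's $A^\sigma=\{[M]\mid\;\vdash M:\sigma\}$, a harmless simplification that the paper itself sanctions in the remark following its proof, which notes that any family $\{A^\sigma\}_\sigma$ satisfying Definition~\ref{def:applicative structure} would serve.
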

\begin{proof}
We construct an \LCL-model $\mathcal{M}_{\rho^\star} = \langle \mathcal{M}_0, \rho^\star \rangle$ where the applicative structure is defined as follows:
\[\mathcal{M}_0 = \langle D, \{ A^\sigma\}_\sigma, \cdot, \cs, \ck, \ci \rangle  \] 

\begin{itemize}
\item $D = \{[M] \mid M \in CL \}$;
\item $A^\sigma = \{[M] \mid  \ M \in CL \text{ and }\ \vdash M : \sigma \}$;
\item $[M] \cdot [N] = [MN]$;
\item $\cs = [\CS]$;
\item $\ck = [\CK]$;
\item $\ci = [\CI]$.
\end{itemize}
We define an environment $\rho^\star$ in the following way \[ \rho^\star(x) = [x].\]

In the definition of the set $A^\sigma$ the judgment $\vdash M  :\sigma$ means that $M : \sigma$ can be derived from the axioms and rules of Fig.~\ref{figure:axiomatization}, in other words, $M : \sigma$ is a theorem in \LCL. 

First we need to prove that this tuple is an applicative structure, so we prove that it satisfies the conditions of Definition~\ref{def:applicative structure}. The set $D$ is a non-empty set. Further, we have that for every $\sigma \in {\sf Types}$, the set $A^\sigma$ is a subset of $D$. The operation $\cdot$ defined by $[M] \cdot [N]$ is a binary operation on $D$, which is extensional.  The extensionality of the operation $\cdot$ is a consequence of the extensionality of combinatory logic. 
 Let $[M], [N] \in D$ be such that for all $[L] \in D$, $[M] \cdot [L] = [N] \cdot [L]$. If $x$ is a variable that does not appear neither in $M$ nor in $N$, then by hypothesis we have that $[M] \cdot [x] = [N] \cdot [x]$, which is equivalent to $[Mx] = [Nx]$. The latter implies that $Mx = Nx$ is provable in $\mathcal{EQ}^\eta$. Since $x$ is a variable that does not appear in $M$ and $N$,  from  $Mx = Nx$ we can prove $M = N$ by extensionality of $\mathcal{EQ}^\eta$. Thus $[M] = [N]$. Thus, we have proved that the operation $\cdot$ is extensional.
 Moreover, for $\sigma, \tau \in {\sf Types}$, let $[M] \in A^{\sigma \rightarrow \tau}$ and $[N] \in A^\sigma$. The former implies $\vdash M : \sigma  \rightarrow \tau $, whereas the latter implies $\vdash N  : \sigma $. From the assumptions, an instance of axiom scheme (Ax $4$), i.e. $(M : \sigma\rightarrow \tau) \Rightarrow ((N : \sigma) \Rightarrow ( MN : \tau))$, and by inference rule (MP), we obtain $\vdash MN : \tau$. It then follows that $[MN] \in A^\tau$. We have shown $\cdot : A^{\sigma \rightarrow \tau} \times A^\sigma \rightarrow A^\tau$.

Next, we prove that $[\CS]$ is an element of $D$ which satisfies equation~(\ref{eq:cs 1}) and equation~(\ref{eq:cs 2}) of Definition~\ref{def:applicative structure}. We have $\vdash \CS : (\sigma \rightarrow (\tau \rightarrow \rho)) \rightarrow (( \sigma \rightarrow \tau) \rightarrow (\sigma \rightarrow \rho))$ for every $\sigma, \rho, \tau \in {\sf Types}$, hence it  follows that $[\CS] \in A^{(\sigma \rightarrow (\tau \rightarrow \rho)) \rightarrow (( \sigma \rightarrow \tau) \rightarrow (\sigma \rightarrow \rho))}$, and the element $[\CS]$ satisfies the equation~(\ref{eq:cs 1}). Let $[M], [N], [L] \in D$. Then,
\begin{align*}
(([\CS] \cdot [M]) \cdot [N]) \cdot [L] & = [((\CS M)N)L] = [(ML)(NL)] \\ & = [ML] \cdot [NL] \\ 
& = ([M] \cdot [L]) \cdot ([N] \cdot [L]),
\end{align*}
so element $[\CS]$ satisfies the equation~(\ref{eq:cs 2}).

Similarly, $[\CK]$ is an element of $D$ which satisfies equations~(\ref{eq:ck 1}) and (\ref{eq:ck 2}). For every $\sigma, \tau \in {\sf Types}$, we have $\vdash \CK : \sigma \rightarrow (\tau \rightarrow \sigma)$, thus, $[\CK] \in A^{\sigma \rightarrow (\tau \rightarrow \sigma)}$ and element $[\CK]$ satisfies the equation~(\ref{eq:ck 1}). For $[M], [N] \in D$ we have
\begin{align*}
([\CK] \cdot [M]) \cdot [N] = [(\CK M)N] = [M].
\end{align*}
Hence, element $[\CK]$ satisfies  the equation~(\ref{eq:ck 2}) as well.

From $\vdash \CI : \sigma \rightarrow \sigma$, we get $[\CI] \in A^{\sigma \rightarrow \sigma}$ for every $\sigma\in {\sf Types}$. 
Further, for $[M]\in D$ it holds that 
\begin{align*}
[\CI] \cdot [M] = [\CI M] = [M]
\end{align*}
We have proved that the element $[\CI]$ of $D$ satisfies the equations~(\ref{eq:ci 1}) and (\ref{eq:ci 2}). 
Finally, we have proved that the tuple $\mathcal{M}_0$ is an applicative structure for \LCL. As a consequence, we have that $\mathcal{M}_{\rho^\star}
$ is an \LCL-model.

It is straightforward to prove that if an environment $\rho^\star$ is defined by $\rho^\star (x) = [x]$, then for every term $M$ we have $\intt{M}_{\rho^\star} = [M]$. 

If $\intt{M}_\rho = \intt{N}_\rho$ holds in every \LCL-model, then it also holds in model $\mathcal{M}_{\rho^\star}$ we have constructed. Thus, we have $\intt{M}_{\rho^\star} = \intt{N}_{\rho^\star}$. From the definition of environment $\rho^\star$ it follows that $[M] = [N]$, that is $M=N$ is provable in $\mathcal{EQ}^\eta$.
\end{proof}

Note that sets $A^\sigma$ 
 are not essential in this proof, since the theorem deals with the meaning of equal terms, without considering their types. In particular, we could choose any family $\{A^\sigma \}_\sigma$ of subsets of domain, which would satisfy conditions of Definition~\ref{def:applicative structure}.

\vspace{0.5em}
\subsection{Discussion}
\vspace{0.5em}

Soundness of the equational theory will be used in the proofs of soundness and completeness of the axiomatization for \LCL \ in the following Section~\ref{sec:semantical soundness and completeness}. If $M = N$ is provable in $\mathcal{EQ}^\eta$, then by Theorem~\ref{thm:soundness of eq theory} we have that $\intt{M}_\rho = \intt{N}_\rho$ in any model. As a consequence, the formula $\displaystyle M : \sigma \Rightarrow N : \sigma$, where $M = N$ is provable in the equational theory $\mathcal{EQ}^\eta$, is satisfied in any model. Since the formula $M : \sigma \Rightarrow N: \sigma$ is an instance of (Ax $5$), when $M = N$ is provable in the equational theory $\mathcal{EQ}^\eta$, we conclude that every instance of (Ax $5$) is satisfied in any model.

From the soundness and completeness of the equational theory $\mathcal{EQ}^\eta$ we have that $M = N$ is provable in $\mathcal{EQ}^\eta$ if and only if $\intt{M}_\rho = \intt{N}_\rho$ in every model. Furthermore, to prove that $M = N$ is provable in $\mathcal{EQ}^\eta$ it is enough to prove that $\intt{M}_{\rho^\star} = \intt{N}_{\rho^\star}$ in the model $\mathcal{M}_{\rho^\star}$, defined in the proof of Theorem~\ref{thm:completeness of eq theory}. However, both of these problems are undecidable. Since proving $M = N$ in $\mathcal{EQ}^\eta$ is not decidable, we have also that $N \in [M]$ is not decidable. As a consequence of this and the definition of term interpretation in environment $\rho^\star$, $\intt{M}_{\rho^\star}$, we have that $\intt{M}_{\rho^\star} = \intt{N}_{\rho^\star}$ is also not decidable.

 \vspace{0.5em}
\section{Soundness and completeness of the axiomatization for \LCL}\label{sec:semantical soundness and completeness}
\vspace{0.5em}

In this section, we prove that the logic  of combinatory logic \LCL\ is sound and complete w.r.t. the semantics introduced in \ref{sec:semantics}.  Hence this semantics is well suited to prove soundness and completeness for both the equational theory $\mathcal{EQ}^\eta$  and  the logic of combinatory logic  \LCL.

\subsection{Soundness of \LCL}
\vspace{0.5em}

The property: whenever a formula is a deductive consequence of a set of formulas, it is a semantical consequence of that set, is referred to as the {\em soundness}.

\begin{theorem}[Soundness of \LCL]\label{thm:soundness}
If $T \vdash \alpha$, then $T \models \alpha$.
\end{theorem}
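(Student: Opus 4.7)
The plan is to prove soundness by induction on the length of the derivation witnessing $T \vdash \alpha$, showing that for every \LCL-model $\mathcal{M}_\rho$ with $\mathcal{M}_\rho \models T$, we have $\mathcal{M}_\rho \models \alpha$. The base cases split according to whether $\alpha$ is an axiom instance or belongs to $T$; the inductive case handles (MP).

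If $\alpha \in T$, the conclusion is immediate from $\mathcal{M}_\rho \models T$. If $\alpha$ is an instance of one of the logical axioms (Ax $6$), (Ax $7$), (Ax $8$), then Definition~\ref{def:satisfiability} interprets $\Rightarrow$ and $\neg$ in the standard classical manner, so these schemes are satisfied in every model by the usual truth-table argument. For the combinatory axioms (Ax $1$), (Ax $2$), (Ax $3$), we compute directly: $\intt{\CS}_\rho = \cs$, and by clause~(\ref{eq:cs 1}) of Definition~\ref{def:applicative structure} we have $\cs \in A^{(\sigma \rightarrow (\tau \rightarrow \rho)) \rightarrow ((\sigma \rightarrow \tau) \rightarrow (\sigma \rightarrow \rho))}$, so $\mathcal{M}_\rho \models \CS : (\sigma \rightarrow (\tau \rightarrow \rho)) \rightarrow ((\sigma \rightarrow \tau) \rightarrow (\sigma \rightarrow \rho))$; the cases of $\CK$ and $\CI$ follow analogously from~(\ref{eq:ck 1}) and (\ref{eq:ci 1}).

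For (Ax $4$), suppose $\mathcal{M}_\rho \models M : \sigma \rightarrow \tau$ and $\mathcal{M}_\rho \models N : \sigma$, so that $\intt{M}_\rho \in A^{\sigma \rightarrow \tau}$ and $\intt{N}_\rho \in A^\sigma$. Since $\cdot$ restricts to a map $A^{\sigma \rightarrow \tau} \times A^\sigma \to A^\tau$ by Definition~\ref{def:applicative structure}, it follows that $\intt{MN}_\rho = \intt{M}_\rho \cdot \intt{N}_\rho \in A^\tau$; unwinding Definition~\ref{def:satisfiability} gives the conclusion of the implication in every model. For (Ax $5$), assume $M =_{w,\eta} N$; by Theorem~\ref{thm:soundness of eq theory} we get $\intt{M}_\rho = \intt{N}_\rho$ in every \LCL-model, so if $\intt{M}_\rho \in A^\sigma$ then $\intt{N}_\rho \in A^\sigma$, and the implication holds. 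This is the step where we genuinely rely on the previous section, and it is also the subtlest point of the argument, since (Ax $5$) is what forces the semantics to validate the equational theory of the underlying combinatory logic.

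The inductive step handles the (MP) rule: if $\alpha$ is obtained from $\beta \Rightarrow \alpha$ and $\beta$, both derivable from $T$ by shorter proofs, then by the induction hypothesis $T \models \beta \Rightarrow \alpha$ and $T \models \beta$. For any $\mathcal{M}_\rho \models T$, Definition~\ref{def:satisfiability} (via the definition of $\Rightarrow$ in terms of $\neg$ and $\wedge$) forces $\mathcal{M}_\rho \models \alpha$. I do not expect any real obstacle here; the only care required is to treat (Ax $4$) and (Ax $5$) cleanly, ensuring that the typing-closure property of $\cdot$ and the equational soundness theorem are invoked exactly where needed.
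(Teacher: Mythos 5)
Your proof is correct and follows essentially the same route as the paper's: verify that every axiom instance holds in every model (using clauses~(\ref{eq:cs 1})--(\ref{eq:ci 1}) for (Ax $1$)--(Ax $3$), the typing closure of $\cdot$ for (Ax $4$), Theorem~\ref{thm:soundness of eq theory} for (Ax $5$), and classical truth-table reasoning for (Ax $6$)--(Ax $8$)), and that (MP) preserves satisfaction. The only difference is that you make the induction on the length of the derivation and the case $\alpha \in T$ explicit, which the paper leaves implicit; this is a presentational improvement, not a different argument.
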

\begin{proof} First, we prove that every instance of each axiom scheme holds in every model and that the inference rule preserves the validity.

Let $\mathcal{M}_\rho$ be an \LCL-model. By Definition~\ref{def:meaning of term}, $\intt{\CS}_\rho = \cs$, $\intt{\CK}_\rho = \ck$, and $\intt{\CI}_\rho = \ci$. The element $\cs$ belongs to the set $A^{(\sigma \rightarrow  (\tau \rightarrow \rho)) \rightarrow ((\sigma \rightarrow \tau) \rightarrow (\sigma \rightarrow \rho))}$ for every $\sigma, \tau, \rho \in {\sf Types}$. Thus, we have that $\intt{\CS}_\rho \in A^{(\sigma \rightarrow ( \tau \rightarrow \rho)) \rightarrow ((\sigma \rightarrow \tau) \rightarrow (\sigma \rightarrow \rho))}$ and every instance of axiom scheme (Ax $1$) holds in every model.

Similarly, we have that $\ck \in A^{\sigma \rightarrow (\tau \rightarrow \sigma)}$ for every $\sigma, \tau \in {\sf Types}$. Therefore, we conclude that $\intt{\CK}_\rho \in A^{\sigma \rightarrow (\tau \rightarrow \sigma)}$ for every $\sigma, \tau \in {\sf Types}$, that is every instance of axiom scheme (Ax $2$) holds in every model.

Element $\ci$ belongs to set $A^{\sigma \rightarrow \sigma}$ for every $\sigma \in {\sf Types}$, thus every instance of axiom scheme (Ax $3$) holds in every model.

Next, we want to prove that every instance of (Ax $4$) is satisfied by  $\mathcal{M}_\rho$. Note that we consider the classical propositional connectives, that is $\alpha \Rightarrow \beta$ is not satisfied by $\mathcal{M}_\rho$ if and only if $\alpha$ is satisfied, and $\beta$ is not. Suppose that instance of (Ax $4$) is not satisfied by a model  $\mathcal{M}_\rho$, this is equivalent to $\mathcal{M}_\rho \models M : \sigma \rightarrow \tau$, $\mathcal{M}_\rho \models N : \sigma$ and $\mathcal{M}_\rho \not\models MN : \tau$. The hypothesis $\mathcal{M}_\rho \models M : \sigma \rightarrow \tau$ implies $\intt{M}_\rho \in A^{\sigma\rightarrow \tau}$ and the hypothesis $\mathcal{M}_\rho \models N : \sigma$ implies $\intt{N}_\rho \in A^{\sigma}$. By Definition~\ref{def:applicative structure} and Definition~\ref{def:meaning of term} we obtain that $\intt{MN}_\rho = \intt{M}_\rho \cdot \intt{N}_\rho \in A^{\tau}$. This is in contradiction with the assumption that $\mathcal{M}_\rho \not\models MN : \tau$. Thus, every instance of (Ax $4$) is satisfied by $\mathcal{M}_\rho$.

Further, we consider an instance of (Ax $5$). If $\mathcal{M}_\rho \models M : \sigma$, then $\intt{M}_\rho \in A^\sigma$, and if $M = N$ is provable in $\mathcal{EQ}^\eta$, then $\intt{M}_\rho = \intt{N}_\rho$ by Theorem~\ref{thm:soundness of eq theory}. From the latter and the assumption that $\intt{M}_\rho \in A^\sigma$, we obtain $\intt{N}_\rho \in A^\sigma$. Thus, $\mathcal{M}_\rho \models N : \sigma$,  and we conclude that every instance of (Ax $5$) is satisfied by $\mathcal{M}_\rho$.

It is not possible that a model does not satisfy an instance of (Ax $6$), since it would mean that    a model satisfies $\alpha$ and $\beta$, but does not satisfy $\alpha$ and it leads to a contradiction.

Similarly, for an instance of (Ax $7$). Let  $\mathcal{M}_\rho$ be a model such that:
\begin{enumerate}
\item[(1)] $\mathcal{M}_\rho \models \alpha \Rightarrow (\beta \Rightarrow \gamma)$, 
\item[(2)] $\mathcal{M}_\rho \models \alpha \Rightarrow \beta$,
\item[(3)] $\mathcal{M}_\rho \models \alpha$.
\end{enumerate} 
From $(2)$ and $(3)$ we obtain $\mathcal{M}_\rho \models \beta$. From the latter, $(1)$ and $(3)$ we obtain $\mathcal{M}_\rho \models \gamma$. Thus, every instance of (Ax $7$) is satisfied by a model $\mathcal{M}_\rho$.

Let us consider an instance of (Ax $8$). Suppose   that  it is not satisfied by a model $\mathcal{M}_\rho$, then \begin{enumerate}
\item[(1)] $\mathcal{M}_\rho \models \neg \neg \alpha \Rightarrow \neg \beta$,
\item[(2)] $\mathcal{M}_\rho \models \neg \neg \alpha \Rightarrow \beta$,
\item[(3)] $\mathcal{M}_\rho \not\models \neg \alpha$.
\end{enumerate} 
From $(3)$ we obtain $\mathcal{M}_\rho \models \neg \neg \alpha$. From the latter and $\mathcal{M}_\rho\models \neg \neg \alpha \Rightarrow \neg \beta$ we get $\mathcal{M}_\rho \models \neg \beta$. On the other hand, from $\mathcal{M}_\rho \models \neg \neg \alpha \Rightarrow \beta$ and $\mathcal{M}_\rho \models \neg \neg \alpha$ we get $\mathcal{M}_\rho \models \beta$. This is a contradiction with the previous conclusion that $\mathcal{M}_\rho \models \neg \beta$. Thus,  an instance of (Ax $8$) is satisfied by a model $\mathcal{M}_\rho$.

Next, we prove that the inference rule (MP) of Fig.~\ref{figure:axiomatization} preserves validity. If $\mathcal{M}_\rho \models\alpha \Rightarrow \beta$ and $\mathcal{M}_\rho \models \alpha$, then $\mathcal{M}_\rho \models \beta$ by Definition~\ref{def:satisfiability}. 
\end{proof}

\vspace{0.5em}
\subsection{Completeness of \LCL}
\vspace{0.5em}

The converse to Soundness theorem is called the  Completeness theorem. Completeness is the property: whenever a formula is semantical consequence of a set of formulas, it is also a deductive consequence of that set. We prove completeness by adapting the Henkin-style completeness method, which was developed for proving completeness of modal logic. The goal is to prove that every consistent set is satisfiable, then the completeness will be a straightforward consequence. 

As we have already mentioned, one of the key points will be Deduction theorem (Theorem~\ref{thm:deduction}) we proved in 
\ref{sec:axiomatization}. Another important part of the proof is the extension of a consistent set to a maximal consistent set, which will be used in the construction of the canonical model.

We start with an auxiliary lemma which gives some useful properties of consistent sets.

\begin{lemma}\label{lemma:property of a consistent set} Let $T$ be a consistent set of formulas. For any formula $\alpha$, either $T \cup \{\alpha \}$ is consistent, or $T \cup \{\neg \alpha \}$ is consistent.

\end{lemma}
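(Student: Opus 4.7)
The plan is to argue by contradiction. Suppose, for the sake of contradiction, that both $T \cup \{\alpha\}$ and $T \cup \{\neg \alpha\}$ are inconsistent. By Definition~\ref{def:consistent set}, inconsistency means every formula is derivable from the set, so in particular, for any formula $\beta$ whatsoever, we have $T, \alpha \vdash \beta$ and $T, \neg \alpha \vdash \beta$. Since $T$ is assumed consistent, we may pick a specific $\beta$ such that $T \not\vdash \beta$; our goal will be to derive $T \vdash \beta$ anyway, obtaining a contradiction that refutes the initial assumption.

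The key tool is the Deduction Theorem (Theorem~\ref{thm:deduction}): applying it to $T, \alpha \vdash \beta$ yields $T \vdash \alpha \Rightarrow \beta$, and applying it to $T, \neg \alpha \vdash \beta$ yields $T \vdash \neg \alpha \Rightarrow \beta$. It now remains to combine these into $T \vdash \beta$ using the logical axioms. Since the axioms (Ax~$6$), (Ax~$7$), (Ax~$8$) together with rule (MP) form a standard Hilbert-style axiomatization of classical propositional logic, the tautology
\[
\vdash (\alpha \Rightarrow \beta) \Rightarrow ((\neg \alpha \Rightarrow \beta) \Rightarrow \beta)
\]
is a theorem of \LCL. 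Invoking this theorem and applying (MP) twice, first with the premise $T \vdash \alpha \Rightarrow \beta$ and then with $T \vdash \neg \alpha \Rightarrow \beta$, gives $T \vdash \beta$, the desired contradiction.

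The main obstacle is the verification of the auxiliary classical tautology from the specific axiom schemes (Ax~$6$)–(Ax~$8$). This is routine but genuinely tedious as a pure Hilbert-style derivation; in practice one would either carry out the derivation once and for all, or simply appeal to the well-known fact that (Ax~$6$)–(Ax~$8$) plus (MP) are complete for classical propositional tautologies, so that any propositionally valid schema such as $(\alpha \Rightarrow \beta) \Rightarrow ((\neg \alpha \Rightarrow \beta) \Rightarrow \beta)$ is automatically a theorem. Aside from this bookkeeping, the argument is short and relies only on Definition~\ref{def:consistent set} and Theorem~\ref{thm:deduction}.
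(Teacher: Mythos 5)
Your proof is correct and follows essentially the same route as the paper's: argue by contradiction, apply the Deduction Theorem (Theorem~\ref{thm:deduction}) to both $T,\alpha\vdash\cdot$ and $T,\neg\alpha\vdash\cdot$, and close with classical propositional reasoning. The only difference is cosmetic: the paper instantiates the derived formula as $\bot$ and uses the equivalences ``$T\vdash\alpha\Rightarrow\bot$ iff $T\vdash\neg\alpha$'' and ``$T\vdash\neg\alpha\Rightarrow\bot$ iff $T\vdash\alpha$'', whereas you pick a witness $\beta$ with $T\not\vdash\beta$ and invoke the case-split tautology $(\alpha\Rightarrow\beta)\Rightarrow((\neg\alpha\Rightarrow\beta)\Rightarrow\beta)$; both variants rest on the same (reasonable, but unproved in the paper as well) appeal to the fact that (Ax $6$)--(Ax $8$) with (MP) derive all classical propositional tautologies.
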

\begin{proof}
  Suppose the opposite, that both $T \cup \{\alpha \}$ and $T \cup \{ \neg \alpha\}$ are inconsistent, this is equivalent to $T , \alpha \vdash \bot$  and $T, \neg \alpha \vdash \bot$. By Theorem~\ref{thm:deduction} we obtain $T \vdash \alpha\Rightarrow \bot$ and $T \vdash \neg \alpha \Rightarrow \bot$. The former is equivalent to $T \vdash \neg \alpha$, and the latter is equivalent to $T \vdash \alpha$. 
    However, this contradicts  the assumption that $T$ is a consistent set. 
  Hence, it is not possible that both $T \cup \{\alpha \}$ and $T \cup \{ \neg \alpha\}$ are inconsistent.
\end{proof}

\begin{definition}\label{def:max consistent set} Let $T$ be a set of formulas. $T$ is a maximal consistent set if it is consistent and for any formula $\alpha$, either $\alpha \in T$ or $\neg \alpha \in T$.

\end{definition}

\begin{theorem}\label{thm:max consistent set} Every consistent set can be extended to a maximal consistent set.
\end{theorem}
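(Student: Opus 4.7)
The plan is a standard Lindenbaum-style construction. First, I would observe that the language of \LCL\ is countable: the term variables, type variables, and primitive combinators together form a countable alphabet, so the $CL$-terms, types, and hence the type assignment statements $M:\sigma$ are countable, and the \LCL-formulas built from them via $\neg$ and $\Rightarrow$ are countable as well. So I can fix an enumeration $\alpha_0, \alpha_1, \alpha_2, \ldots$ of all \LCL-formulas.

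Next, I would inductively build a chain of consistent sets $T_0 \subseteq T_1 \subseteq T_2 \subseteq \cdots$ by setting $T_0 = T$ and, for each $n \geq 0$,
\[
T_{n+1} = \begin{cases} T_n \cup \{\alpha_n\}, & \text{if this set is consistent,} \\ T_n \cup \{\neg \alpha_n\}, & \text{otherwise.} \end{cases}
\]
Lemma~\ref{lemma:property of a consistent set} ensures that whenever $T_n$ is consistent at least one of $T_n \cup \{\alpha_n\}$ and $T_n \cup \{\neg \alpha_n\}$ is consistent, so by induction on $n$ every $T_n$ is consistent. I then define $T^\star = \bigcup_{n \in \mathbb{N}} T_n$; clearly $T \subseteq T^\star$, and by construction, for each formula $\alpha = \alpha_n$ in the enumeration, either $\alpha_n \in T_{n+1} \subseteq T^\star$ or $\neg \alpha_n \in T_{n+1} \subseteq T^\star$, which will give the maximality condition of Definition~\ref{def:max consistent set}.

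The main obstacle, and the only nontrivial verification, is showing that $T^\star$ itself is consistent. For this I would appeal to the finitary nature of derivations in Definition~\ref{def:proof}: any proof of $\bot$ from $T^\star$ uses only finitely many hypotheses $\beta_1, \ldots, \beta_k$ from $T^\star$. Each $\beta_i$ enters $T^\star$ at some stage $n_i$, i.e.\ $\beta_i \in T_{n_i}$. Letting $N = \max\{n_1, \ldots, n_k\}$, the chain property yields $\{\beta_1, \ldots, \beta_k\} \subseteq T_N$, so $T_N \vdash \bot$, contradicting the consistency of $T_N$ established at the induction step. Hence $T^\star$ is consistent, and combined with the dichotomy above it is a maximal consistent set extending $T$, completing the proof.
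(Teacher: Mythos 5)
Your proposal is correct and follows essentially the same Lindenbaum-style construction as the paper: the same enumeration, the same chain $T_0 \subseteq T_1 \subseteq \cdots$ built using Lemma~\ref{lemma:property of a consistent set}, and the same union $T^\star$. The only cosmetic difference is in verifying consistency of $T^\star$: you derive a contradiction directly from a hypothetical finite proof of $\bot$ landing inside some $T_N$, whereas the paper phrases the same finitary argument as showing $T^\star$ is deductively closed and does not contain all formulas; your packaging is, if anything, slightly cleaner.
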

\begin{proof} Let $T$ be a consistent set. We will construct a maximal consistent set $T^\star$ such that $T \subseteq T^\star$. Let $\alpha_0, \alpha_1, \ldots$ be an enumeration of all formulas from \LCL. We define a sequence of sets $T_i$, $i=0, 1, \ldots$ as follows:
\begin{enumerate}
\item[(1)] $T_0 = T$
\item[(2)] for every $i \geq 0$
\begin{enumerate}
\item if $T_i \cup \{\alpha_i \}$ is consistent, then $T_{i+1} = T_i \cup \{\alpha_i \}$, otherwise
\item $T_{i+1} = T_{i} \cup \{\neg \alpha_i\}$
\end{enumerate}
\item[(3)] $\displaystyle T^\star = \bigcup\limits_{i=0}^\infty T_i$
\end{enumerate}
We need to prove that $T^\star$ is a maximally consistent set. The construction of the set $T^\star$ ensures it is maximal. 

In order to prove that $T^\star$ is consistent, it is enough to prove that it is deductively closed and it does not contain all formulas. By construction of the sets $T_i$, it follows that every set $T_i$ is consistent.  
  Let $\alpha$ be an \LCL \ formula. We will prove that $\alpha \in T^\star$, whenever $T^\star \vdash \alpha$. If $\alpha = \alpha_j$ and $T_i \vdash \alpha$, then $\alpha \in T^\star$, since $T_{\text{max}\{i, j\} + 1}$ is consistent. Suppose that $T^\star \vdash \alpha$ and $\alpha_1, \alpha_2, \ldots, \alpha_n$ is a proof of $\alpha$ from $T^\star$. Since the sequence $\alpha_1, \alpha_2, \ldots, \alpha_n$ is finite, there exists a set $T_i$, such that $T_i \vdash \alpha$ and $\alpha\in T^\star$. 

Since $T^\star$ is a deductively closed set which does not contain all formulas, we conclude $T^\star$ is consistent.
\end{proof}

\begin{remark} Whenever we have $T \vdash \alpha$, $T$ is a consistent set, and $T^\star$ is a maximal consistent set, defined as in the proof  of Theorem~\ref{thm:max consistent set},  we have that $\alpha \in T^\star$. If $T$ is a consistent set and $T \vdash \alpha$, then $T \cup \{\alpha \}$ is consistent,  that is $T \cup \{\neg \alpha \}$ is inconsistent. Thus, $ \alpha \in T^\star$.

\end{remark}
We use a maximal consistent set $T^\star$, constructed in the proof of Theorem~\ref{thm:max consistent set}, to construct a canonical applicative structure.

\begin{definition}\label{def:canonical applicative strucutre} Let $T^\star$ be a maximal consistent set. A canonical applicative structure is a tuple $\mathcal{M}_{T^\star} = \langle D, \{A^\sigma \}_\sigma, \cdot, \cs, \ck, \ci\rangle$ where
\begin{itemize}
\item $D = \{[M] \mid M \in CL \}$,
\item $A^\sigma = \{[M] \mid M \in CL \text{ and } M : \sigma \in T^\star \}$,
\item $[M] \cdot [N] = [MN]$,
\item $\cs = [\CS]$,
\item $\ck = [\CK]$,
\item $\ci = [\CI]$.
\end{itemize}
\end{definition}

\begin{lemma}\label{lemma:canonical model} A canonical applicative structure $\mathcal{M}_{T^\star}$ is an applicative structure for \LCL.
\end{lemma}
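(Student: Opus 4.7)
The plan is to verify each clause of Definition~\ref{def:applicative structure} for the tuple $\mathcal{M}_{T^\star}$. The construction is very close in spirit to the applicative structure built in the proof of Theorem~\ref{thm:completeness of eq theory}: the carrier $D$ and the operation $\cdot$ are identical, and the only genuinely new ingredient is that the indexed family $\{A^\sigma\}_\sigma$ is now determined by the maximal consistent set $T^\star$ rather than by theoremhood in \LCL. Consequently, most clauses can be handled as before; the new issues all concern the interaction of $T^\star$ with the equational theory and with the typing constraints.

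First I would dispatch the bookkeeping: $D$ is nonempty (it contains $[\CI]$), and $A^\sigma \subseteq D$ directly from the definition. Well-definedness of $[M]\cdot[N] = [MN]$ on equivalence classes follows from the congruence rules (app-l) and (app-r) of $\mathcal{EQ}^\eta$. For extensionality, I would assume $[M]\cdot[L] = [N]\cdot[L]$ for every $[L]\in D$, pick a variable $x$ not occurring in $M$ or $N$, and instantiate with $[L]=[x]$; this gives $[Mx]=[Nx]$, whence $Mx=Nx$ is provable in $\mathcal{EQ}^\eta$, and the rule (ext) yields $M=N$, i.e.\ $[M]=[N]$. For the typing restriction $\cdot:A^{\sigma\rightarrow\tau}\times A^\sigma\rightarrow A^\tau$, take $M:\sigma\rightarrow\tau, N:\sigma\in T^\star$; since (Ax~4) is a theorem it lies in $T^\star$, and two applications of (MP) together with deductive closure of $T^\star$ (established in the proof of Theorem~\ref{thm:max consistent set}) give $MN:\tau\in T^\star$, so $[MN]\in A^\tau$.

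Next I would check the combinator clauses. Membership is immediate: (Ax~1), (Ax~2) and (Ax~3) are theorems, hence belong to $T^\star$ for every choice of types, so $[\CS], [\CK], [\CI]$ lie in the prescribed $A^\sigma$'s. The computational identities $(([\CS]\cdot[M])\cdot[N])\cdot[L] = ([M]\cdot[L])\cdot([N]\cdot[L])$, $([\CK]\cdot[M])\cdot[N] = [M]$ and $[\CI]\cdot[M] = [M]$ unfold exactly as in the proof of Theorem~\ref{thm:completeness of eq theory}, using the axioms $(S)$, $(K)$, $(I)$ of $\mathcal{EQ}^\eta$ to identify the resulting equivalence classes.

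The main obstacle is ensuring that $A^\sigma$ respects $\mathcal{EQ}^\eta$-equivalence, so that $A^\sigma = \{[M] \mid M:\sigma \in T^\star\}$ is unambiguously a set of equivalence classes. Concretely, if $M =_{w,\eta} N$ and $M:\sigma\in T^\star$, we need $N:\sigma\in T^\star$. This is exactly what (Ax~5) buys us: the formula $M:\sigma \Rightarrow N:\sigma$ is an axiom whenever $M=_{w,\eta}N$, hence belongs to $T^\star$, and (MP) then yields $N:\sigma\in T^\star$. Once this representative-independence is established, every clause above becomes well posed and $\mathcal{M}_{T^\star}$ is a genuine applicative structure for \LCL.
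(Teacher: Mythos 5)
Your proposal is correct and follows essentially the same route as the paper: verify each clause of Definition~\ref{def:applicative structure}, reuse the computational identities for $[\CS],[\CK],[\CI]$ and the extensionality argument from the proof of Theorem~\ref{thm:completeness of eq theory}, obtain the typing clause $\cdot:A^{\sigma\rightarrow\tau}\times A^\sigma\rightarrow A^\tau$ from (Ax~4) plus deductive closure of $T^\star$, and secure well-definedness of $A^\sigma$ under $\mathcal{EQ}^\eta$-equivalence via (Ax~5). Your explicit remarks on the well-definedness of $[M]\cdot[N]$ on equivalence classes and on the role of (Ax~5) are slightly more detailed than the paper's, but the substance is identical.
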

\begin{proof}
We need to prove that the tuple $\mathcal{M}_{T^\star}$ defined in Definition~\ref{def:canonical applicative strucutre} satisfies conditions of Definition~\ref{def:applicative structure}. 

The set $D = \{[M] \mid M \in CL \}$ is a non-empty set.

For every $\sigma \in {\sf Types}$, the set $A^\sigma = \{[M] \mid M \in CL \text{ and } M : \sigma\in T^\star \}$ is a subset of the domain $D$. Since $M : \sigma \Rightarrow N : \sigma$ is an instance of axiom scheme whenever $M = N$ is provable in $\mathcal{EQ}^\eta$, we have that if $M = N$ is provable in $\mathcal{EQ}^\eta$, then $M : \sigma \Rightarrow N : \sigma \in T^\star$. Moreover, $T^\star$ is a maximal consistent set, and  for terms $M,N$ such that $M = N$ is provable in $\mathcal{EQ}^\eta$, it holds that $M : \sigma \in T^\star$ if and only if $N : \sigma \in T^\star$. As a consequence, the set $A^\sigma$ is well-defined.

The operation $\cdot$ defined by $[M] \cdot [N] = [MN]$ is an operation on $D$. The extensionality of the operation $\cdot$ is a consequence of the extensionality of combinatory logic. 
 We have proved extensionality of this operation in the proof of Theorem~\ref{thm:completeness of eq theory}, where the domain and the operation were defined exactly the same as here.
Moreover, for $\sigma, \tau \in {\sf Types}$, let $[M] \in A^{\sigma \rightarrow \tau}$ and $[N] \in A^\sigma$. The former implies $M : \sigma \rightarrow \tau \in T^\star$, while the latter implies $N  : \sigma \in T^\star$. Since $T^\star$ is deductively closed, we obtain $MN : \tau \in T^\star$, which is equivalent to $[MN] \in A^\tau$. We  have shown  that $\cdot : A^{\sigma \rightarrow \tau} \times A^\sigma \rightarrow A^\tau$.

Next, we prove that $[\CS]$ is an element of $D$ which satisfies equations~(\ref{eq:cs 1}) and (\ref{eq:cs 2}) of Definition~\ref{def:applicative structure}. From the fact that $\CS : (\sigma \rightarrow (\tau \rightarrow \rho)) \rightarrow (( \sigma \rightarrow \tau) \rightarrow (\sigma \rightarrow \rho))$ is an instance of axiom scheme (Ax $1$), we conclude that $\CS : (\sigma \rightarrow (\tau \rightarrow \rho)) \rightarrow (( \sigma \rightarrow \tau) \rightarrow (\sigma \rightarrow \rho)) \in T^\star$. It follows that $[\CS] \in A^{(\sigma \rightarrow (\tau \rightarrow \rho)) \rightarrow (( \sigma \rightarrow \tau) \rightarrow (\sigma \rightarrow \rho))}$. 
In the proof of Theorem~\ref{thm:completeness of eq theory} we have shown that the element $[\CS]$ satisfies the equation~(\ref{eq:cs 2}).

Similarly, $[\CK]$ is an element of $D$ which satisfies the equations~(\ref{eq:ck 1}) and (\ref{eq:ck 2}). For every $\sigma, \tau \in {\sf Types}$, $\CK : \sigma \rightarrow (\tau \rightarrow \sigma)$ is an instance of axiom scheme (Ax $2$) and it belongs to the set $T^\star$. Thus, $[\CK] \in A^{\sigma \rightarrow (\tau \rightarrow \sigma)}$. 
Proof that element $[\CK]$ satisfies equation~(\ref{eq:ck 2}) is given in the proof of Theorem~\ref{thm:completeness of eq theory}.

Since the formula $\CI : \sigma \rightarrow \sigma$ is an instance of axiom scheme (Ax $3$) for every $\sigma \in {\sf Types}$, it belongs to $T^\star$. Hence, $[\CI] \in A^{\sigma \rightarrow \sigma}$ for every $\sigma\in {\sf Types}$, and the element $[\CI]$ satisfies the equation~(\ref{eq:ci 1}). 
We have proved that the element $[\CI]$ satisfies the equation (\ref{eq:ci 2}) in the proof of Theorem~\ref{thm:completeness of eq theory}.

Thus, $\mathcal{M}_{T^\star}$ is an applicative structure for \LCL.
\end{proof}

Let $\mathcal{M}_{T^\star}$ be a canonical applicative structure. As we have already observed in the proof of Theorem~\ref{thm:completeness of eq theory}, if we define  an environment  $\rho^\star$ for $\mathcal{M}_{T^\star}$  by $\rho^\star (x)= [x]$, then for every term $M$ we have $\intt{M}_{\rho^\star} = [M]$.

\begin{definition} A canonical model is a tuple $\mathcal{M}_{T^\star, \rho^\star}=\langle \mathcal{M}_{T^\star}, \rho^\star \rangle$, where $\mathcal{M}_{T^\star}$ is a canonical applicative structure and $\rho^\star$ is an environment defined by $\rho^\star (x) = [x]$.

\end{definition}
\begin{lemma} Let $\mathcal{M}_{T^\star, \rho^\star}$ be a canonical model and $\alpha$ a formula. \[\mathcal{M}_{T^\star, \rho^\star} \models \alpha \text{ if and only if } \alpha \in T^\star. \]
\end{lemma}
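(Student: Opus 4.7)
The plan is to prove the statement by induction on the structure of the formula $\alpha$. Since $\LCL$-formulas are generated by $\alpha ::= M:\sigma \mid \neg\alpha \mid \alpha\Rightarrow\beta$, there are exactly three cases to handle: the atomic case and the two propositional cases. I expect the atomic case to be the one requiring genuine argument; the two propositional cases should follow by routine classical reasoning using maximality and deductive closure of $T^\star$.

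For the atomic case $\alpha = M:\sigma$, I would unfold definitions: $\mathcal{M}_{T^\star,\rho^\star} \models M:\sigma$ iff $\intt{M}_{\rho^\star} \in A^\sigma$, and by the observation preceding the lemma, $\intt{M}_{\rho^\star} = [M]$. So the task reduces to showing that $[M] \in A^\sigma$ iff $M:\sigma \in T^\star$. The right-to-left direction is immediate from the definition of $A^\sigma$. For left-to-right, if $[M] \in A^\sigma$, then by definition there exists some $N \in CL$ with $[M] = [N]$ and $N:\sigma \in T^\star$. Since $[M] = [N]$ means $M = N$ is provable in $\mathcal{EQ}^\eta$, the formula $N:\sigma \Rightarrow M:\sigma$ is an instance of $(\text{Ax } 5)$, hence belongs to $T^\star$; then by $(MP)$ and deductive closure of $T^\star$ (noted in the proof of Theorem~\ref{thm:max consistent set}), we conclude $M:\sigma \in T^\star$.

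For the negation case $\alpha = \neg\beta$: by Definition~\ref{def:satisfiability}, $\mathcal{M}_{T^\star,\rho^\star} \models \neg\beta$ iff $\mathcal{M}_{T^\star,\rho^\star} \not\models \beta$, which by the induction hypothesis is equivalent to $\beta \notin T^\star$; by maximality of $T^\star$ (Definition~\ref{def:max consistent set}) this is in turn equivalent to $\neg\beta \in T^\star$. For the implication case $\alpha = \beta \Rightarrow \gamma$, I would unfold $\beta \Rightarrow \gamma$ as $\neg(\beta \wedge \neg\gamma)$ using the standard definitions of the derived connectives, or equivalently argue directly: $\mathcal{M}_{T^\star,\rho^\star} \models \beta \Rightarrow \gamma$ iff $\mathcal{M}_{T^\star,\rho^\star} \not\models \beta$ or $\mathcal{M}_{T^\star,\rho^\star} \models \gamma$, which by the induction hypothesis is equivalent to $\beta \notin T^\star$ or $\gamma \in T^\star$. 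For the equivalence with $(\beta \Rightarrow \gamma) \in T^\star$, I would use that $T^\star$ is maximal and deductively closed: if $\beta \notin T^\star$, then $\neg\beta \in T^\star$ and hence $\beta \Rightarrow \gamma \in T^\star$ (from the classical tautology $\neg\beta \Rightarrow (\beta \Rightarrow \gamma)$, provable using $(\text{Ax } 6)$--$(\text{Ax } 8)$); if $\gamma \in T^\star$, then $\beta \Rightarrow \gamma \in T^\star$ by $(\text{Ax } 6)$ and $(MP)$; conversely, if $\beta \Rightarrow \gamma \in T^\star$ and $\beta \in T^\star$, then $\gamma \in T^\star$ by $(MP)$ and deductive closure.

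The main obstacle is the atomic case, where one must reconcile the set-theoretic definition of $A^\sigma$ (which is given via equivalence classes modulo $\mathcal{EQ}^\eta$) with membership in $T^\star$ (which is closed only under provability in $\LCL$). The bridge is precisely $(\text{Ax } 5)$, which imports the equational theory into the axiomatic system and ensures that membership in $T^\star$ respects the equivalence relation defining $[M]$; without $(\text{Ax } 5)$ the set $A^\sigma$ might not even be well-defined, and this is exactly the point noted in the proof of Lemma~\ref{lemma:canonical model}.
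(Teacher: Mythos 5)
Your proof is correct and follows essentially the same structural induction as the paper's: the atomic case is reduced to $[M] \in A^\sigma$ iff $M:\sigma \in T^\star$, and the propositional cases are discharged via maximality and deductive closure of $T^\star$. The only differences are cosmetic — you spell out the use of (Ax $5$) to show that membership of $M:\sigma$ in $T^\star$ is invariant under $\mathcal{EQ}^\eta$-equality (a point the paper delegates to the well-definedness remark in the proof of Lemma~\ref{lemma:canonical model}), and you treat the primitive connective $\Rightarrow$ where the paper instead treats the derived connective $\wedge$.
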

\begin{proof}
We prove the statement by induction on the structure of the formula $\alpha$.

First, we consider the case where $\alpha$ is a statement $M : \sigma$. 
\begin{align*}
\ \mathcal{M}_{T^\star, \rho^\star} \models M : \sigma 
 \text{ if and only if } \  & \intt{M}_{\rho ^\star} \in A^\sigma \\
 \text{ if and only if }  \  & [M] \in A^\sigma 
 \\
  \text{ if and only if }  \  & M : \sigma \in T^\star. 
\end{align*}

Next, if $\alpha$ is a conjunction $\beta \wedge \gamma$ we have 
\begin{align*} 
 \mathcal{M}_{T^\star, \rho^\star} \models  \beta \wedge\gamma \  \text{ if and only if } \   &  \mathcal{M}_{T^\star, \rho^\star} \models \beta  \text{ and } \mathcal{M}_{T^\star, \rho^\star} \models \gamma \\
 \ \text{ if and only if } \ & \beta \in T^\star \text{ and } \gamma\in T^\star \\
\ \text{ if and only if } \ & \beta \wedge \gamma \in T^\star.
\end{align*}

Finally, let $\alpha$ be a negation $\neg \beta$. Then, 
\begin{align*}
\mathcal{M}_{T^\star, \rho^\star} \models \neg \beta & \text{ if and only if } \mathcal{M}_{T^\star, \rho^\star} \not\models \beta  \\
& \text{ if and only if }  \beta \not\in T^\star \\
& \text{ if and only if }  \neg \beta \in T^\star.
\end{align*}
\end{proof}

Finally, we can prove that for every consistent set there exists an \LCL-model which satisfies that set.

\begin{theorem}\label{thm:completeness} Every consistent set is satisfiable.
\end{theorem}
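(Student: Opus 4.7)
The plan is to assemble the machinery developed earlier in this section rather than to build anything new. Given an arbitrary consistent set $T$ of \LCL-formulas, the first step is to invoke Theorem~\ref{thm:max consistent set} to extend $T$ to a maximal consistent set $T^\star$ with $T \subseteq T^\star$; inspecting the construction in that proof, the base case $T_0 = T$ together with the monotonicity of the sequence $T_i$ guarantees the inclusion $T \subseteq T^\star$ explicitly, so this is not in doubt.

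The second step is to form the canonical model $\mathcal{M}_{T^\star, \rho^\star} = \langle \mathcal{M}_{T^\star}, \rho^\star \rangle$, where $\mathcal{M}_{T^\star}$ is the canonical applicative structure of Definition~\ref{def:canonical applicative strucutre} and $\rho^\star$ is the environment $\rho^\star(x) = [x]$. That $\mathcal{M}_{T^\star}$ really is an applicative structure for \LCL\ has been verified in Lemma~\ref{lemma:canonical model}, so $\mathcal{M}_{T^\star, \rho^\star}$ is a bona fide \LCL-model.

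The last step is to show $\mathcal{M}_{T^\star, \rho^\star} \models T$. For any $\alpha \in T$, the inclusion $T \subseteq T^\star$ gives $\alpha \in T^\star$, and then the truth lemma proved immediately before this theorem (the equivalence $\mathcal{M}_{T^\star, \rho^\star} \models \alpha$ iff $\alpha \in T^\star$) yields $\mathcal{M}_{T^\star, \rho^\star} \models \alpha$. Hence every formula of $T$ is satisfied by the canonical model, and $T$ is satisfiable.

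Because all the real work has been done in the extension theorem, the definition of the canonical structure, and the truth lemma, I do not expect a genuine obstacle in this final step; the proof is essentially a three-line assembly. The only point worth being careful about is to state the chain of implications $\alpha \in T \Rightarrow \alpha \in T^\star \Rightarrow \mathcal{M}_{T^\star, \rho^\star} \models \alpha$ explicitly, so that the reader sees where each preceding result is being used.
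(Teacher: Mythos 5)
Your proof is correct and follows exactly the same route as the paper's: extend $T$ to a maximal consistent set $T^\star$ via Theorem~\ref{thm:max consistent set}, form the canonical model $\mathcal{M}_{T^\star, \rho^\star}$, and apply the truth lemma together with $T \subseteq T^\star$ to conclude that every formula of $T$ is satisfied. No gaps; the explicit chain $\alpha \in T \Rightarrow \alpha \in T^\star \Rightarrow \mathcal{M}_{T^\star, \rho^\star} \models \alpha$ is precisely the paper's argument.
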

\begin{proof}
Let $T$ be a consistent set. Theorem~\ref{thm:max consistent set} ensures that the set $T$ can be extended to a maximal consistent set $T^\star$. For the canonical model $\mathcal{M}_{T^\star, \rho^\star}$  we know that $\mathcal{M}_{T^\star, \rho^\star}$ satisfies every formula from $T^\star$. Since $T \subseteq T^\star$, we have that every formula from $T$ is satisfied by $\mathcal{M}_{T^\star, \rho^\star}$. We have proved that there is a model which satisfies a consistent set $T$.
\end{proof}

\begin{theorem}[Completeness of \LCL]\label{thm:completeness2}
If $T \models \alpha$, then $T \vdash \alpha$.
\end{theorem}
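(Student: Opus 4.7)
The plan is to establish completeness as a short corollary of Theorem~\ref{thm:completeness} via the standard contrapositive argument. All the heavy machinery — extension to a maximal consistent set (Theorem~\ref{thm:max consistent set}), the canonical model, and the truth lemma feeding Theorem~\ref{thm:completeness} — has already been done, so the remainder is essentially bookkeeping.

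First, I would assume $T \not\vdash \alpha$ and prove that $T \cup \{\neg \alpha\}$ is consistent. Suppose, for contradiction, that it were inconsistent. By Definition~\ref{def:consistent set}, every formula is then derivable from $T \cup \{\neg \alpha\}$; in particular, picking any formula $\beta$, one has $T, \neg \alpha \vdash \beta$ and $T, \neg \alpha \vdash \neg \beta$. Two applications of the Deduction theorem (Theorem~\ref{thm:deduction}) yield $T \vdash \neg \alpha \Rightarrow \beta$ and $T \vdash \neg \alpha \Rightarrow \neg \beta$. Feeding these into the instance $(\neg \alpha \Rightarrow \neg \beta) \Rightarrow ((\neg \alpha \Rightarrow \beta) \Rightarrow \alpha)$ of axiom scheme (Ax~$8$) and using (MP) twice produces $T \vdash \alpha$, contradicting the hypothesis. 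Hence $T \cup \{\neg \alpha\}$ is consistent.

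Next, I would invoke Theorem~\ref{thm:completeness} on $T \cup \{\neg \alpha\}$ to obtain an \LCL-model $\mathcal{M}_\rho$ satisfying every formula in it. Then $\mathcal{M}_\rho \models T$ and $\mathcal{M}_\rho \models \neg \alpha$; by Definition~\ref{def:satisfiability}, the latter means $\mathcal{M}_\rho \not\models \alpha$. This model therefore witnesses $T \not\models \alpha$, which closes the contrapositive and yields the theorem.

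There is no substantive obstacle here, since the technical core of completeness is encapsulated in Theorem~\ref{thm:completeness} and its supporting lemmas. The only point worth stating explicitly is the small purely propositional step that converts inconsistency of $T \cup \{\neg \alpha\}$ into $T \vdash \alpha$; this step crucially uses axiom (Ax~$8$) and is precisely where the classical (as opposed to intuitionistic) character of \LCL\ is exploited. Everything else is a direct application of the already-proved results.
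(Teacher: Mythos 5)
Your proof is correct and takes essentially the same route as the paper's: both reduce the theorem to Theorem~\ref{thm:completeness} via the equivalence between inconsistency of $T \cup \{\neg\alpha\}$ and $T \vdash \alpha$. The only differences are cosmetic --- you argue contrapositively rather than directly, and you spell out the Deduction-theorem/(Ax~$8$) step that the paper compresses into the phrase ``the latter implies $T \vdash \alpha$.''
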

\begin{proof}
If $T \models \alpha$, then $T \cup \{\neg \alpha \}$ is not satisfiable. By Theorem~\ref{thm:completeness} it follows that $T \cup \{\neg \alpha \} $ is inconsistent, i.e. $T \cup \{\neg \alpha \}  \vdash \bot$. The latter implies $T \vdash \alpha$.
\end{proof}

\begin{theorem}
$T \vdash \alpha$ if and only if  $T \models \alpha$ .
\end{theorem}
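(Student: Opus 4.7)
The plan is to observe that this theorem is simply the biconditional packaging of two results already established in this section, namely the Soundness theorem (Theorem~\ref{thm:soundness}) and the Completeness theorem (Theorem~\ref{thm:completeness2}). No new machinery is required.

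First I would handle the forward direction: assume $T \vdash \alpha$ and invoke Theorem~\ref{thm:soundness} directly to conclude $T \models \alpha$. For the converse, assume $T \models \alpha$ and invoke Theorem~\ref{thm:completeness2} to conclude $T \vdash \alpha$. Both implications are literally the statements of those theorems, so the proof reduces to citing them.

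There is no real obstacle here, since all the substantive work, namely validating each axiom scheme and the rule (MP) in any \LCL-model for soundness, and the Henkin-style construction of the canonical model $\mathcal{M}_{T^\star, \rho^\star}$ together with the truth lemma for completeness, has already been carried out. The only thing to be careful about is ensuring the statement is interpreted with the same notion of $\vdash$ (as in Definition~\ref{def:proof}) and the same notion of $\models$ (as in Definition~\ref{def:semantical consequence}) used in Theorems~\ref{thm:soundness} and \ref{thm:completeness2}; once that is noted, the proof is a one-line combination.
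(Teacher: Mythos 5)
Your proposal is correct and matches the paper's intent exactly: the paper states this theorem immediately after Theorem~\ref{thm:soundness} and Theorem~\ref{thm:completeness2} with no separate proof, treating it as the direct conjunction of those two results. Citing soundness for the forward direction and completeness for the converse is precisely the intended one-line argument.
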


\vspace{0.5em}
\subsection{Application}
\vspace{0.5em}

An important consequence of Theorem~\ref{thm:soundness} is that combinatory logic is sound w.r.t. given semantics. Therefore the proposed semantics is a new semantics proven to be sound for combinatory logic.
\begin{theorem}[Soundness of $CL_\rightarrow$]\label{thm:soundness of CL}
If $\Gamma \vdash_{\TCL} M:\sigma$, then  $\Gamma \models  M:\sigma$ .
\end{theorem}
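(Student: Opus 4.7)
The plan is to derive this theorem as an immediate consequence of two results already established in the paper: Proposition~\ref{proposition:from cl to extension}, which lifts typing judgments of $CL_\rightarrow$ into derivations of \LCL, and Theorem~\ref{thm:soundness}, the soundness of the \LCL\ axiomatization with respect to the applicative-structure semantics. The preliminary observation is that every basis $\Gamma = \{x_1 : \sigma_1, \ldots, x_n : \sigma_n\}$ is, by Definition~\ref{def:statement...}, a set of type-assignment statements, and each such statement is a well-formed atomic formula of \LCL. Hence $\Gamma$ may be viewed without modification as a set of \LCL-formulas, and the notation $\Gamma \models M : \sigma$ acquires its meaning from Definition~\ref{def:semantical consequence}.

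The argument then proceeds in two steps. First, assuming $\Gamma \vdash_{\TCL} M : \sigma$, I apply Proposition~\ref{proposition:from cl to extension} to conclude that $\Gamma \vdash M : \sigma$ in \LCL. Second, I apply Theorem~\ref{thm:soundness} to the deductive judgment $\Gamma \vdash M : \sigma$ in order to obtain $\Gamma \models M : \sigma$. Chaining these two implications yields the desired conclusion.

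There is no real obstacle to overcome here, since the substantive semantic work has already been discharged in the proof of Theorem~\ref{thm:soundness}, where each axiom scheme of Fig.~\ref{figure:axiomatization} was shown to hold in every \LCL-model and rule (MP) was shown to preserve validity. The only point requiring a brief justification is the coherence of notation: that a basis of $CL_\rightarrow$ is literally a set of \LCL-formulas, so that the semantical consequence relation $\models$ applies without any translation. Once this is observed, the theorem follows by composition of the two named results, and one could equivalently re-prove it directly by induction on the derivation $\Gamma \vdash_{\TCL} M : \sigma$, using Definition~\ref{def:applicative structure} at each typing rule, but the route through \LCL\ is shorter and emphasises that \LCL\ faithfully internalises $CL_\rightarrow$.
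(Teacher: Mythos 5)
Your proof is correct and is essentially identical to the paper's own argument: the paper likewise derives the theorem by first applying Proposition~\ref{proposition:from cl to extension} to obtain $\Gamma \vdash M : \sigma$ in \LCL\ and then invoking Theorem~\ref{thm:soundness}. Your additional remark that a basis is literally a set of \LCL-formulas is a harmless clarification the paper leaves implicit.
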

\begin{proof}
If $\Gamma \vdash_{\TCL} M : \sigma$, then there is a proof for $M : \sigma$ from $\Gamma$ in \LCL, by Proposition~\ref{proposition:from cl to extension}. Next, by Soundness theorem (Theorem~\ref{thm:soundness}) we obtain $\Gamma \models M : \sigma$.
\end{proof}

Nevertheless, the converse does not hold. For example, let us consider the following two statements: $x : \sigma$  and $\CK x y : \sigma$. The equation $\CK x y = x$ is provable in the equational theory $\mathcal{EQ}^\eta$, so the terms $\CK x y$ and $x$ have the same interpretation in every model, $\intt{\CK x y}_\rho = \intt{x}_\rho$, by Theorem~\ref{thm:soundness of eq theory}. As a result of this, we have that whenever $x : \sigma$ is satisfied by a model, then $\CK x y : \sigma$ is also satisfied by that model. Accordingly, the statement $\CK x y : \sigma$ is a semantical consequence of the statement  $x : \sigma$, and we write $x : \sigma \models \CK x y : \sigma$. If the simple type assignment system was  complete for the given semantics, then $x : \sigma \models \CK x y : \sigma$ would imply $x  :\sigma \vdash \CK x y : \sigma$, and the latter does not hold. We are not able to derive the statement $\CK x y : \sigma$ from the statement $x :  \sigma$ using axioms and rules of Fig.~\ref{fig: type assignment}.

To obtain a sound and complete type assignment system, 
the system $CL_\rightarrow$ presented in Fig.~\ref{fig: type assignment} needs to be extended by the following additional rule:
\begin{align*}
\AxiomC{$\Gamma \vdash_{\TCL^=} M : \sigma$}
\AxiomC{$ M = N$ \text{ is provable in } $\mathcal{EQ}^\eta$}
\RightLabel{\text{(eq)}}
\BinaryInfC{$\Gamma \vdash_{\TCL^=} N : \sigma$}
\DisplayProof
\end{align*}

The type assignment system obtained by adding rule (eq) to the axioms and rules of Fig.~\ref{fig: type assignment} will be denoted by $CL_\rightarrow^=$, whereas derivability is denoted by $\vdash_{\TCL^=}$. The system $CL_\rightarrow^=$ is sound and complete w.r.t. the proposed semantics, as proved in the sequel.

\begin{theorem}[Soundness of $CL_\rightarrow^=$] If $\Gamma \vdash_{\TCL^=} M : \sigma$, then $\Gamma \models M : \sigma$.
\end{theorem}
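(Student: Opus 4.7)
The plan is to proceed by structural induction on the derivation of $\Gamma \vdash_{\TCL^=} M : \sigma$ in the extended system $CL_\rightarrow^=$. Fix an arbitrary \LCL-model $\mathcal{M}_\rho = \langle \mathcal{M}, \rho \rangle$ with $\mathcal{M}_\rho \models \Gamma$, and show that each rule of $CL_\rightarrow^=$ preserves the property $\mathcal{M}_\rho \models M : \sigma$.

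For the cases in which the final rule is one of the rules of $CL_\rightarrow$ (axiom $\in$), (axiom $S$), (axiom $K$), (axiom $I$), or ($\rightarrow$ elim), the argument is essentially the one used for Theorem~\ref{thm:soundness of CL}: these cases are handled by invoking Proposition~\ref{proposition:from cl to extension} on the immediate subderivations and unpacking Definition~\ref{def:applicative structure} and Definition~\ref{def:meaning of term}. Concretely, for (axiom $\in$) one uses $\mathcal{M}_\rho \models \Gamma$ directly; for the combinator axioms one uses that $\intt{\CS}_\rho = \cs \in A^{(\sigma \rightarrow (\tau \rightarrow \rho)) \rightarrow ((\sigma \rightarrow \tau) \rightarrow (\sigma \rightarrow \rho))}$, and analogously for $\ck$ and $\ci$; for ($\rightarrow$ elim) one uses that the operation $\cdot$ restricts to a map $A^{\sigma \rightarrow \tau} \times A^\sigma \to A^\tau$ together with $\intt{MN}_\rho = \intt{M}_\rho \cdot \intt{N}_\rho$.

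The only genuinely new case is the rule (eq). Here the derivation ends with $\Gamma \vdash_{\TCL^=} N : \sigma$ obtained from $\Gamma \vdash_{\TCL^=} M : \sigma$ and a proof of $M = N$ in $\mathcal{EQ}^\eta$. By the induction hypothesis, $\Gamma \models M : \sigma$, so $\mathcal{M}_\rho \models M : \sigma$, i.e.\ $\intt{M}_\rho \in A^\sigma$. By Soundness of the equational theory (Theorem~\ref{thm:soundness of eq theory}), $\intt{M}_\rho = \intt{N}_\rho$, hence $\intt{N}_\rho \in A^\sigma$ and $\mathcal{M}_\rho \models N : \sigma$. Since $\mathcal{M}_\rho$ was arbitrary, $\Gamma \models N : \sigma$, which closes the induction. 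There is no substantive obstacle: the entire strength of the (eq) case is already packaged in Theorem~\ref{thm:soundness of eq theory}, and the remaining cases are unchanged from the soundness argument for $CL_\rightarrow$.
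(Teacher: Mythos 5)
Your proof is correct, but it takes a genuinely different route from the paper. The paper does not perform a semantic induction on the typing derivation at all: it first observes that the embedding of Proposition~\ref{proposition:from cl to extension} extends to $CL_\rightarrow^=$, because the new rule (eq) is matched by the axiom scheme (Ax~$5$) of \LCL, so $\Gamma \vdash_{\TCL^=} M : \sigma$ yields $\Gamma \vdash M : \sigma$ in \LCL; it then concludes by the already-established soundness of \LCL\ (Theorem~\ref{thm:soundness}). You instead argue directly on the derivation in $CL_\rightarrow^=$, verifying each rule against Definitions~\ref{def:applicative structure} and~\ref{def:meaning of term} and handling (eq) via Theorem~\ref{thm:soundness of eq theory}. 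The two arguments bottom out in the same fact for the equational step, since the paper's validation of (Ax~$5$) inside Theorem~\ref{thm:soundness} is itself an appeal to Theorem~\ref{thm:soundness of eq theory}; the difference is organizational. Your version is more self-contained and never leaves the type assignment system, which is a reasonable trade-off; the paper's version is shorter given that \LCL-soundness is already in hand and makes explicit the conceptual point that $CL_\rightarrow^=$ embeds into \LCL. One small infelicity: in your base cases you say they are ``handled by invoking Proposition~\ref{proposition:from cl to extension} on the immediate subderivations,'' but in a direct semantic induction that proposition plays no role --- the concrete verifications you then give (membership of $\cs$, $\ck$, $\ci$ in the appropriate $A^\sigma$, the typing of $\cdot$, and $\mathcal{M}_\rho \models \Gamma$ for the variable axiom) are what actually carry the argument, and they suffice.
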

\begin{proof}
Similarly to the proof of Theorem~\ref{thm:soundness of CL}, the soundness of $CL_\rightarrow^=$ can be obtained from the soundness of the axiomatization for \LCL. As the axiom scheme (Ax $5$) corresponds to the rule (eq),  Proposition~\ref{proposition:from cl to extension}  can also be proved for $CL_\rightarrow^=$: If $\Gamma \vdash 
_{\TCL^=} M : \sigma$, then  $\Gamma \vdash M : \sigma $ (in \LCL). From the soundness of the axiomatization for \LCL \ it follows that $\Gamma \vdash_{\TCL^=} M : \sigma$ implies $\Gamma \models M  :\sigma$.
\end{proof}

Moreover, the converse holds as well. Following the approach used in \cite{KasterovicG20}, we can define a  model $\mathcal{M}^\Gamma$ with respect to a basis $\Gamma$ such that $\mathcal{M}^\Gamma \models M  :\sigma$ if and only if $\Gamma \vdash_{\TCL^=}  M :\sigma$. Since the  model $\mathcal{M}^\Gamma$ is a model of the basis $\Gamma$, we can conclude that $\Gamma \models M : \sigma$ implies $\Gamma \vdash_{\TCL^=} M : \sigma$.  Hence, combinatory logic is sound and complete w.r.t. the proposed semantics.

\begin{definition}\label{def:canon app CL}
 Let $\Gamma$ be a basis.  We define a tuple $\mathcal{M}^{\Gamma}$ as follows. \[\mathcal{M}^{\Gamma} = \langle D, \{A^\sigma\}_\sigma, \cdot, \cs, \ck, \ci \rangle\] where
\begin{itemize}
\item $D = \{ [M] \mid M \in CL\}$;
\item $A^\sigma = \{[M] \mid M \in CL \text{ and } \Gamma \vdash_{\TCL^=} M : \sigma \}$;
\item $[M] \cdot [N] = [MN]$;
\item $\cs = [\CS]$;
\item $\ck = [\CK]$;
\item $\ci = [\CI]$;
\end{itemize}
\end{definition}
\begin{lemma}\label{lemma:canon app CL}
Let $\Gamma$ be a basis. The tuple $\mathcal{M}^{\Gamma}$ introduced in Definition~\ref{def:canon app CL} is an applicative structure.

\end{lemma}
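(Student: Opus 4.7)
The plan is to mirror the proof of Lemma~\ref{lemma:canonical model} almost verbatim, replacing membership in $T^\star$ with derivability $\Gamma \vdash_{\TCL^=} \cdot$ and using the axioms/rules of $CL_\rightarrow^=$ in place of the axiom schemes of \LCL. I need to verify each clause of Definition~\ref{def:applicative structure} in turn.

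First I would observe that $D = \{[M] \mid M \in CL\}$ is non-empty and each $A^\sigma$ is a subset of $D$ by definition. To see that $A^\sigma$ is well-defined on equivalence classes, suppose $M =_{w,\eta} N$, i.e.\ $M=N$ is provable in $\mathcal{EQ}^\eta$. If $\Gamma \vdash_{\TCL^=} M : \sigma$, then by rule (eq) we get $\Gamma \vdash_{\TCL^=} N : \sigma$, and conversely by symmetry of $\mathcal{EQ}^\eta$. Hence membership in $A^\sigma$ depends only on the equivalence class. The same argument shows that the operation $[M]\cdot[N] := [MN]$ is well-defined, since $\mathcal{EQ}^\eta$ is closed under the rules (app-l) and (app-r).

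Next I would verify the closure property $\cdot : A^{\sigma\rightarrow\tau}\times A^\sigma \to A^\tau$: if $[M] \in A^{\sigma\rightarrow \tau}$ and $[N]\in A^\sigma$, then $\Gamma \vdash_{\TCL^=} M:\sigma\rightarrow\tau$ and $\Gamma \vdash_{\TCL^=} N:\sigma$, so by $(\rightarrow$ elim$)$ we obtain $\Gamma \vdash_{\TCL^=} MN:\tau$, whence $[MN]\in A^\tau$. Extensionality of $\cdot$ is inherited from the extensionality of the underlying combinatory logic exactly as in the proof of Theorem~\ref{thm:completeness of eq theory}: if $[M]\cdot[L]=[N]\cdot[L]$ for every $[L]\in D$, pick a variable $x$ not occurring in $M$ or $N$ and instantiate with $[L]=[x]$; this gives $[Mx]=[Nx]$, i.e.\ $Mx=Nx$ in $\mathcal{EQ}^\eta$, and an application of the $(ext)$ rule yields $M=N$, hence $[M]=[N]$.

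Finally I would check the conditions on the special elements $\cs=[\CS]$, $\ck=[\CK]$, $\ci=[\CI]$. The typing parts of equations (\ref{eq:cs 1}), (\ref{eq:ck 1}), (\ref{eq:ci 1}) follow immediately from the axioms $(S)$, $(K)$, $(I)$ of Fig.~\ref{fig: type assignment}, which are valid in $CL_\rightarrow^=$, so the corresponding class $[\CS]$, $[\CK]$, $[\CI]$ lies in the desired $A^\sigma$ for every choice of type parameters. The equational parts (\ref{eq:cs 2}), (\ref{eq:ck 2}), (\ref{eq:ci 2}) are already established in the proof of Theorem~\ref{thm:completeness of eq theory}, since the domain and operation are identical to the ones considered there; the computations $((\cs\cdot[M])\cdot[N])\cdot[L]=[(ML)(NL)]=([M]\cdot[L])\cdot([N]\cdot[L])$, $(\ck\cdot[M])\cdot[N]=[M]$, and $\ci\cdot[M]=[M]$ all follow from the $\mathcal{EQ}^\eta$-axioms $(S)$, $(K)$, $(I)$. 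I do not expect any genuine obstacle here, because every non-trivial ingredient (well-definedness under $\mathcal{EQ}^\eta$, extensionality, closure under application) has already been verified in Theorem~\ref{thm:completeness of eq theory} and Lemma~\ref{lemma:canonical model}; the only novelty is replacing the ``$T^\star$-membership'' side of those arguments by ``$\Gamma \vdash_{\TCL^=}$-derivability,'' which works smoothly thanks to the presence of the rule (eq) in $CL_\rightarrow^=$.
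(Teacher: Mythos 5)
Your proposal is correct and follows essentially the same route as the paper: verify each clause of Definition~\ref{def:applicative structure}, deferring the extensionality of $\cdot$ and the equational conditions (\ref{eq:cs 2}), (\ref{eq:ck 2}), (\ref{eq:ci 2}) to the computations already done in the proof of Theorem~\ref{thm:completeness of eq theory}, deriving the closure of $\cdot$ from ($\rightarrow$ elim), and obtaining the typing conditions (\ref{eq:cs 1}), (\ref{eq:ck 1}), (\ref{eq:ci 1}) from the combinator axioms of Fig.~\ref{fig: type assignment}. You are in fact slightly more careful than the paper in explicitly checking, via the rule (eq), that membership in $A^\sigma$ is well-defined on $\mathcal{EQ}^\eta$-equivalence classes, a point the paper leaves implicit here.
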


\begin{proof}
Similarly to the proof of Theorem~\ref{thm:completeness of eq theory}, prove that a tuple  $\mathcal{M}^{\Gamma} = \langle D, \{A^\sigma\}_\sigma, \cdot, \cs, \ck, \ci  \rangle$ satisfies conditions of Definition~\ref{def:applicative structure}. The set $D$ is a non-empty set, and for every $\sigma \in {\sf Types}$, $A^\sigma \subseteq D$. The proof that a binary operation $\cdot$ on the set $D$ is extensional is already given in the proof of Theorem~\ref{thm:completeness of eq theory}. If $[M]  \in A^{\sigma
\rightarrow\tau}$ and $[N] \in A^{\sigma}$, then $\Gamma \vdash_{\TCL} M : \sigma\rightarrow\tau$ and $\Gamma \vdash_{\TCL} N : \sigma$. By the rule ($\rightarrow$ elim), we obtain $\Gamma \vdash MN : \tau$, and it follows that $[MN] \in A^\tau$. The proof that elements $[\CS]$, $[\CK]$ and $[\CI]$ satisfy equations \ref{eq:cs 2}, \ref{eq:ck 2} and \ref{eq:ci 2}, respectively, is given in the proof of Theorem~\ref{thm:completeness of eq theory}. For every $\sigma, \rho, \tau \in {\sf Types}$, we have $\Gamma \vdash_{\TCL} \CS : (\sigma \rightarrow (\rho \rightarrow \tau)) \rightarrow ((\sigma \rightarrow \rho) \rightarrow (\sigma \rightarrow \tau))$  by (axiom $K$) in Figure~\ref{fig: type assignment}. Thus, $\cs = [\CS] \in A^{(\sigma \rightarrow (\rho \rightarrow \tau)) \rightarrow ((\sigma \rightarrow \rho) \rightarrow (\sigma \rightarrow \tau))}$ for every  $\sigma, \rho, \tau \in {\sf Types}$ and the element $\cs$ satisfies condition \ref{eq:cs 1}. Similarly, we prove that elements $\ck$ and $\ci$ satisfy conditions \ref{eq:ck 1} and \ref{eq:ci 1}, respectively.
\end{proof}

\begin{lemma}\label{lemma:property of cl canon model} Let $\mathcal{M}^{\Gamma} = \langle D, \{A^\sigma\}_\sigma, \cdot, \cs, \ck, \ci  \rangle$ be an applicative structure defined in Definition~\ref{def:canon app CL} and environment $\rho^\star$ defined by $\rho^{\star} (x) = [x]$. Then, $\mathcal{M}^{\Gamma}_{\rho^\star} \models M : \sigma$ if and only if $\Gamma \vdash_{\TCL^=} M : \sigma$.
\end{lemma}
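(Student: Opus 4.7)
The plan is to unfold the satisfaction statement $\mathcal{M}^{\Gamma}_{\rho^\star} \models M : \sigma$ into $\intt{M}_{\rho^\star} \in A^\sigma$ via Definition~\ref{def:satisfiability}, and then to relate the interpretation $\intt{M}_{\rho^\star}$ to the equivalence class $[M]$ so that membership in $A^\sigma$ can be read off directly from the definition of $A^\sigma$ in Definition~\ref{def:canon app CL}.

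First I would establish by induction on the structure of $M$ that $\intt{M}_{\rho^\star} = [M]$. The base cases handle variables ($\intt{x}_{\rho^\star} = \rho^\star(x) = [x]$) and the primitive combinators (directly by the definition of $\cs,\ck,\ci$ in $\mathcal{M}^\Gamma$), and the application case uses $\intt{MN}_{\rho^\star} = \intt{M}_{\rho^\star} \cdot \intt{N}_{\rho^\star} = [M]\cdot[N] = [MN]$. This reduces the lemma to showing $[M] \in A^\sigma$ if and only if $\Gamma \vdash_{\TCL^=} M : \sigma$.

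The direction $\Gamma \vdash_{\TCL^=} M : \sigma \Rightarrow [M] \in A^\sigma$ is immediate from the definition of $A^\sigma$. For the converse, if $[M] \in A^\sigma$, then by the definition of $A^\sigma$ there exists some $N \in CL$ with $[N] = [M]$ and $\Gamma \vdash_{\TCL^=} N : \sigma$. Here $[N] = [M]$ means exactly that $M = N$ is provable in $\mathcal{EQ}^\eta$, so an application of the rule (eq) to $\Gamma \vdash_{\TCL^=} N : \sigma$ together with the provable equality $N = M$ yields $\Gamma \vdash_{\TCL^=} M : \sigma$.

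The main point to be careful about — and the only place where the new rule (eq) is essential — is the backward direction, where a given representative of $[M]$ may not syntactically coincide with $M$; this is precisely the issue that motivated the introduction of $CL_\rightarrow^=$ in the first place, and the symmetry of $\mathcal{EQ}^\eta$ together with (eq) handles it cleanly. No other obstacles are expected: well-definedness of $A^\sigma$ on equivalence classes is itself a direct consequence of (eq), and the inductive computation of $\intt{M}_{\rho^\star}$ is the same one used in the proof of Theorem~\ref{thm:completeness of eq theory}.
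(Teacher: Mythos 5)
Your proposal is correct and follows essentially the same route as the paper: establish $\intt{M}_{\rho^\star} = [M]$ (by the same inductive computation used for Theorem~\ref{thm:completeness of eq theory}) and then unfold $\mathcal{M}^{\Gamma}_{\rho^\star} \models M : \sigma$ through the definition of $A^\sigma$. You are in fact slightly more careful than the paper at the final step, where you explicitly invoke the rule (eq) to pass from a representative $N$ of $[M]$ with $\Gamma \vdash_{\TCL^=} N : \sigma$ back to $M$ itself --- a point the paper's chain of equivalences leaves implicit.
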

\begin{proof}
As we have already observed in the proof of Theorem~\ref{thm:completeness of eq theory}, it is straightforward to prove that $\intt{M}_{\rho^\star} = [M]$. Using this we obtain
\begin{align*}
\mathcal{M}^{\Gamma}_{\rho^\star} \models  M : \sigma
& \text{ if and only if } \intt{M}_{\rho^\star} \in A^\sigma \\ 
& \text{ if and only if } [M] \in \{[N] \mid N \in CL \text{ and } \Gamma \vdash_{\TCL^=} N : \sigma \} \\
& \text{ if and only if } \Gamma \vdash_{\TCL^=} M : \sigma.
\end{align*}
\end{proof}

\begin{theorem}[Completeness of $CL_\rightarrow^=$]\label{thm:completeness of CL}
Let $\Gamma$ be a basis. If  $\Gamma \models  M:\sigma$, then $\Gamma \vdash_{\TCL^=} M:\sigma$.
\end{theorem}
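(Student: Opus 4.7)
The plan is to use the canonical applicative structure $\mathcal{M}^{\Gamma}$ from Definition~\ref{def:canon app CL} together with the environment $\rho^\star(x) = [x]$ and the characterization provided by Lemma~\ref{lemma:property of cl canon model}. The entire argument turns on exhibiting one model that, on the one hand, satisfies $\Gamma$ and, on the other hand, has its satisfaction relation matching derivability in $CL_\rightarrow^=$.

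First I would verify that $\mathcal{M}^{\Gamma}_{\rho^\star}$ is a model of the basis $\Gamma$. For each declaration $x : \tau \in \Gamma$, the rule (axiom $\in$) of $CL_\rightarrow$ yields $\Gamma \vdash_{\TCL^=} x : \tau$, so by Lemma~\ref{lemma:property of cl canon model} we obtain $\mathcal{M}^{\Gamma}_{\rho^\star} \models x : \tau$. Consequently $\mathcal{M}^{\Gamma}_{\rho^\star} \models \Gamma$.

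Next I would invoke the assumption $\Gamma \models M : \sigma$. By Definition~\ref{def:semantical consequence}, every model of $\Gamma$ is a model of $M : \sigma$, so in particular $\mathcal{M}^{\Gamma}_{\rho^\star} \models M : \sigma$. Applying Lemma~\ref{lemma:property of cl canon model} once more, in the opposite direction, we conclude $\Gamma \vdash_{\TCL^=} M : \sigma$, which is the desired conclusion.

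There is no real obstacle to overcome here: the heavy lifting has already been done in Lemma~\ref{lemma:canon app CL} (showing that $\mathcal{M}^{\Gamma}$ is indeed an applicative structure, which crucially uses the equational axiom scheme (Ax~$5$)/rule (eq) to make the sets $A^\sigma$ invariant under $=_{w,\eta}$) and in Lemma~\ref{lemma:property of cl canon model}. The only subtlety worth flagging is that the construction of $\mathcal{M}^{\Gamma}$ depends on the basis $\Gamma$ itself, so strictly speaking we produce a tailor-made model for each basis; this is harmless, since semantical consequence quantifies over all models and hence over this one in particular. The proof is therefore a two-line application of the two preceding lemmas.
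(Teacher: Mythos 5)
Your proposal is correct and follows essentially the same route as the paper's own proof: both construct the model $\mathcal{M}^{\Gamma}_{\rho^\star}$, check via (axiom $\in$) and Lemma~\ref{lemma:property of cl canon model} that it satisfies $\Gamma$, and then apply the same lemma in the reverse direction to extract $\Gamma \vdash_{\TCL^=} M:\sigma$ from $\mathcal{M}^{\Gamma}_{\rho^\star} \models M:\sigma$. No differences worth noting.
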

\begin{proof}
For a basis $\Gamma$, we can construct the model $\mathcal{M}^{\Gamma}_{\rho^\star}$ as in  Definition~\ref{def:canon app CL} and Lemma~\ref{lemma:property of cl canon model}. $\mathcal{M}^{\Gamma}_{\rho^\star}$ is a model of $\Gamma$, that is for every $x : \sigma \in \Gamma$ we have $\Gamma \vdash_{\TCL^=} x : \sigma$ and it follows that $\intt{x}_{\rho^\star} \in A^\sigma$, i.e. $\mathcal{M}^{\Gamma}_{\rho^\star} \models x : \sigma$. Thus, $\mathcal{M}^{\Gamma}_{\rho^\star} \models \Gamma$. From the later and the assumption    $\Gamma \models  M:\sigma$, we can conclude  
$\mathcal{M}^{\Gamma}_{\rho^\star} \models M: \sigma$. Using Lemma~\ref{lemma:property of cl canon model}  we obtain $\Gamma \vdash_{\TCL^=} M:\sigma$.
\end{proof}

The logic \LCL\ is a conservative extension of the simply typed combinatory logic. We have already stated in Proposition that whenever $M : \sigma$ can be typed in $\Gamma$ in the simply typed combinatory logic, we can derive formula $M:\sigma$ from $\Gamma$ in \LCL. Now, we will prove that the converse also holds.

\begin{theorem}\label{thm:conservative extension}
Let $\Gamma$ be a basis. If $\Gamma \vdash M : \sigma$, then $\Gamma \vdash_{\TCL} M : \sigma$.
\end{theorem}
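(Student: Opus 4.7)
The plan is to compose the soundness of \LCL\ with the canonical-model construction already developed for the type assignment system. Assume $\Gamma \vdash M : \sigma$ in \LCL. Theorem~\ref{thm:soundness} upgrades this to the semantic statement $\Gamma \models M : \sigma$. The strategy is then to instantiate this semantic consequence at a model tailored to $\Gamma$, so that satisfaction of $M : \sigma$ in that particular model directly witnesses derivability in the type assignment system.

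Concretely, I would use the canonical model $\mathcal{M}^{\Gamma}_{\rho^\star}$ of Definition~\ref{def:canon app CL} together with $\rho^\star(x) = [x]$; Lemma~\ref{lemma:canon app CL} certifies that it is an applicative structure for \LCL. The essential verification is $\mathcal{M}^{\Gamma}_{\rho^\star} \models \Gamma$: for every declaration $x : \sigma \in \Gamma$, the rule (axiom~$\in$) yields $\Gamma \vdash_{\TCL} x : \sigma$, which lifts to $\Gamma \vdash_{\TCL^=} x : \sigma$, so by Lemma~\ref{lemma:property of cl canon model} we obtain $\mathcal{M}^{\Gamma}_{\rho^\star} \models x : \sigma$. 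Combining $\mathcal{M}^{\Gamma}_{\rho^\star} \models \Gamma$ with $\Gamma \models M : \sigma$ gives $\mathcal{M}^{\Gamma}_{\rho^\star} \models M : \sigma$, and Lemma~\ref{lemma:property of cl canon model}, read in the reverse direction, then delivers $\Gamma \vdash_{\TCL^=} M : \sigma$.

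The remaining step, and the principal obstacle, is the passage from $\vdash_{\TCL^=}$ to $\vdash_{\TCL}$: one must eliminate any use of the (eq) rule from the derivation supplied by the canonical-model argument. Since axiom (Ax~5) of \LCL\ mirrors the rule (eq) exactly, no purely semantic argument can distinguish the two calculi, so an auxiliary admissibility result is needed. I would aim to prove, by induction on the $CL_\rightarrow^=$-derivation and using the free-variable lemma (Proposition~\ref{lemma:free variable lemma}) together with confluence of weak reduction, that every application of (eq) can be pushed out once the basis $\Gamma$ and the conclusion $M : \sigma$ are held fixed. This elimination is the technical heart of the argument and the step where I expect the proof to demand the most care, since in general the weak-equality conversions used by (eq) can alter the free variables of the subject and therefore must be tracked carefully to ensure the resulting $CL_\rightarrow$-derivation stays within the declarations provided by $\Gamma$.
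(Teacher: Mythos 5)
Your first two steps reproduce the paper's proof exactly: soundness of \LCL\ (the paper's proof actually cites Theorem~\ref{thm:completeness2} at this point, but soundness, Theorem~\ref{thm:soundness}, is what is being used) converts $\Gamma \vdash M : \sigma$ into $\Gamma \models M : \sigma$, and the canonical model $\mathcal{M}^{\Gamma}_{\rho^\star}$ of Definition~\ref{def:canon app CL} with Lemma~\ref{lemma:property of cl canon model} converts that back into a derivation in the type assignment system. Where you depart from the paper is that you correctly notice this route only delivers $\Gamma \vdash_{\TCL^=} M : \sigma$, while the theorem claims $\Gamma \vdash_{\TCL} M : \sigma$; the paper's proof silently writes $\vdash_{\TCL}$ at the step where Theorem~\ref{thm:completeness of CL} only yields $\vdash_{\TCL^=}$.

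However, the (eq)-elimination you propose to close that gap cannot succeed, and the difficulty you flag --- that the conversions used by (eq) can change the free variables of the subject --- is not a technicality to be managed but a genuine obstruction. The paper's own example in the ``Application'' subsection refutes the theorem as literally stated: take $\Gamma = \{x : \sigma\}$. Then $\Gamma \vdash \CK x y : \sigma$ in \LCL\ (from the hypothesis $x : \sigma$, the instance $x : \sigma \Rightarrow \CK x y : \sigma$ of (Ax~5), and (MP)), and correspondingly $\Gamma \vdash_{\TCL^=} \CK x y : \sigma$ via (eq); but $\Gamma \not\vdash_{\TCL} \CK x y : \sigma$ by Proposition~\ref{lemma:free variable lemma}, since $y \notin dom(\Gamma)$. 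So the conclusion is only salvageable with $\vdash_{\TCL^=}$ in place of $\vdash_{\TCL}$ (conservativity over $CL_\rightarrow^=$ rather than over $CL_\rightarrow$), and with that weakening your argument is already complete and coincides with the paper's. In short: your proof is honest about a step the paper elides, but the step is not merely hard --- it is impossible, and your own observation about free variables is precisely where it breaks.
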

\begin{proof}
Let $\Gamma$ be a basis and $\Gamma \vdash M : \sigma$. From Theorem~\ref{thm:completeness2} it follows  that $\Gamma \models M : \sigma$. Further, we get $\Gamma \vdash_{\TCL} M :\sigma$ by Theorem~\ref{thm:completeness of CL}.
\end{proof}

\vspace{0.5em}
\section{Conclusion}\label{sec:conclusion}
\vspace{0.5em}

We have introduced and studied the logic of combinatory  logic \LCL,  a new logical system designed as a propositional extension of simply typed combinatory logic. 
 It is  obtained by extending the language 
 of type assignment statements $M:\sigma$, $M$ being a combinatory term and $\sigma$ a simple type,  with classical propositional connectives, along with corresponding axioms and rules. On the other hand,  the logic \LCL\ can be seen as a logic obtained from classical propositional logic, where propositional letters are replaced by 
 type assignment statements  
$M : \sigma$. The logic \LCL\ captures the inference of type assignment statements in the simply typed combinatory logic.

 The usual approach in defining models for typed combinatory logic is to employ models of untyped combinatory logic and to interpret types as subsets of the untyped model. Given a valuation of term variables, an interpretation map, which interprets terms, is a function satisfying certain conditions, and given a valuation of type variables, the interpretation of types is defined inductively. In turn, in the model we propose the interpretation of term is defined inductively, and  interpretation of types is introduced as a family of sets  
 that  satisfy certain conditions. 
 The satisfiability of the type assignment statement depends on  
 whether the meaning of the term belongs to the corresponding set. This is further extended to the satisfiability of formulas containing propositional connectives in the usual way. The proof of soundness and completeness of the equational theory of untyped combinatory logic w.r.t. this semantics is presented. The given  axiomatization for the logic \LCL\ is proved sound and complete w.r.t. the proposed semantics. 
Consequently, this semantics is a new semantics for simply typed combinatory logic.

The proposed models are inspired by models introduced in \cite{MitchellM91, KasterovicG20}. In \cite{KasterovicG20} Kripke-style models for full simply typed $\lambda$-calculus and combinatory logic have been introduced as extensional applicative structures extended with special elements corresponding to primitive combinators provided with a valuation. We adapt this approach to simply typed combinatory logic and introduce models for \LCL.
The proposed semantics can be  extended to Kripke-style semantics, following the approaches used in \cite{MitchellM91} and \cite{KasterovicG20} for defining Kripke-style semantics for typed lambda calculus. In that case every world determines one \LCL -model, and if there is just one world we  essentially have the \LCL -model. Similarly, in Kripke-style semantics for classical propositional logic, if there is just one world, we have the truth-table semantics of classical propositional logic, \cite{N09}. Nevertheless, if we consider Kripke-style semantics, we will use the same canonical model as a model with one world, to prove completeness.

There are several compelling topics for further work. 
We plan to further investigate combining typed calculi of combinatory logic and $\lambda$-calculus with other logics in two directions. One direction  is to enrich the language of logic and to study first-order extensions of simply typed calculi, \cite{Beeson04}. Combining simply typed combinatory logic and $\lambda$-calculus with classical first-order logic is a step forward towards the formalization of the meta-logic for simply typed theories. Moreover, the connection of these new systems with SMT solvers will further be deepened.
The other direction is to enrich the language of the underlying type theory and to study 
 the extensions of intersection types and polymorphic types  with appropriate logics which enable formal reasoning about these type assignment systems. Furthermore,  we have envisaged to build on the  results of this paper and define formal models for probabilistic reasoning about simply typed terms of combinatory logic, \cite{GhilezanIKOS18}. Finally, we plan to investigate the application of the developed semantics in knowledge representation,  particularly in description logic based languages.

\bibliographystyle{plainurl}
\bibliography{references-KG}
\end{document}